\newcommand{\C}{\mathbb{C}}
\newcommand{\Z}{\mathbb{Z}}
\newcommand{\DD}{\mathcal{D}}
\newcommand{\LL}{\mathcal{L}}
\newcommand{\MP}{{\rm M\!P\!}}
\newcommand{\SYT}{\mathcal{SYT}}
\newcommand{\MT}{\mathcal{MT}}
\newcommand{\SSYT}{\mathcal{SSYT}}
\newcommand{\SSMT}{\mathcal{SSMT}}
\newcommand{\SMT}{\mathcal{SMT}}
\newcommand{\SSsT}{\mathcal{SSST}}
\newcommand{\ORBW}{\mathcal{ORBW}}
\newcommand{\bfa}{{\boldsymbol{a}}}
\newcommand{\bfb}{{\boldsymbol{b}}}
\newcommand{\mo}{\varepsilon}
\newcommand{\multi}[1]{\left\{\mskip-5mu\left\{#1\right\}\mskip-5mu\right\}}
\newcommand{\ov}[1]{\overline{#1}}
\newcommand{\un}[1]{\underline{#1}}
\newcommand{\ovun}[1]{\overline{\underline{#1}}}
\newcommand{\abs}[1]{\lvert #1\rvert}
\newcommand{\defeq}{\vcentcolon=}
\definecolor{defncolor}{HTML}{C44800}
\newcommand{\defn}[1]{\textcolor{defncolor}{\emph{#1}}}
\newcommand{\id}{\epsilon}
\newcommand{\inline}[1]{{\begin{tabular}{c}$#1$\end{tabular}}}
\DeclareMathOperator{\Sym}{Sym}
\DeclareMathOperator{\End}{End}
\DeclareMathOperator{\Hom}{Hom}
\DeclareMathOperator{\sgn}{sgn}
\DeclareMathOperator{\sRSK}{sRSK}
\DeclareMathOperator{\smRSK}{smRSK}
\newcommand{\tT}{{\tilde{T}}}
\newcommand{\tS}{{\tilde{S}}}
\newcommand{\tR}{{\tilde{R}}}
\newcommand{\tB}{{\tilde{B}}}
\newcommand{\tpi}{{\tilde{\pi}}}
\newcommand{\trho}{{\tilde{\rho}}}
\newcommand{\hpi}{{\hat{\pi}}}
\newcommand{\hnu}{{\hat{\nu}}}
\newcommand{\hT}{{\hat{T}}}
\newcommand{\hS}{{\hat{S}}}
\definecolor{c1}{HTML}{0085FF}
\definecolor{c2}{HTML}{FF9000}
\definecolor{c3}{HTML}{90D300}
\definecolor{c4}{HTML}{9728A1}
\newcommand{\colortop}[2]{\foreach \i in {#1}  {\filldraw[fill=#2,draw=#2,line width = 1pt] (T\i) circle (4pt);}}
\newcommand{\colorbot}[2]{\foreach \i in {#1}  {\filldraw[fill=#2,draw=#2,line width = 1pt] (B\i) circle (4pt);}}
\newcommand{\colortopalt}[2]{\foreach \i in {#1}  {\filldraw[fill=white,draw=#2,line width = 1pt] (T\i) circle (4pt);}}
\newcommand{\colorbotalt}[2]{\foreach \i in {#1}  {\filldraw[fill=white,draw=#2,line width = 1pt] (B\i) circle (4pt);}}
\newtheorem{prop}{Proposition}[section]
\newtheorem{theorem}[prop]{Theorem}
\newtheorem{lemma}[prop]{Lemma}
\newtheorem{corollary}[prop]{Corollary}
\theoremstyle{definition}
\newtheorem{definition}[prop]{Definition}
\theoremstyle{remark}
\newtheorem{remark}[prop]{Remark}
\newtheorem{example}[prop]{Example}
\title{Super Multiset RSK and a Mixed Multiset Partition Algebra}
\author{Alexander Wilson}
\begin{document}

\maketitle

\textbf{Abstract} Through dualities on representations on tensor powers and symmetric powers respectively, the partition algebra and multiset partition algebra have been used to study long-standing questions in the representation theory of the symmetric group. In this paper we extend this story to exterior powers, introducing the mixed multiset partition algebra as well as a generalization of the Robinson-Schensted-Knuth algorithm to two-row arrays of multisets with elements from two alphabets. From this algorithm, we obtain enumerative results which reflect representation-theoretic decompositions of this algebra. Furthermore, we use the generalized RSK algorithm to describe the decomposition of a polynomial ring in sets of commuting and anti-commuting variables as a module over both the general linear group and the symmetric group.

\section{Introduction}

Suppose an $A\times B$-module $V$ has a decomposition of the form \begin{align}
    V\cong\bigoplus_{\lambda\in\Lambda} A^\lambda\otimes B^\lambda.\label{eq:decomp}
\end{align} Such decompositions appear naturally in contexts like mutually centralizing actions or decomposing an algebra $A$ as an $A\times A$-module over itself. Comparing dimensions, this decomposition manifests enumeratively as
\begin{align*}
    \dim(V)&=\sum_{\lambda\in\Lambda} \dim(A^\lambda)\cdot\dim(B^\lambda).
\end{align*}

This equality implies that there exists a bijection \begin{align}
    \mathcal{V}\stackrel{\sim}{\longleftrightarrow}\biguplus_{\lambda\in\Lambda} \mathcal{A}_\lambda\times \mathcal{B}_\lambda. \label{eq:general_bijection}
\end{align} where $\mathcal{V}$, $\mathcal{A}_\lambda$, and $\mathcal{B}_\lambda$ are indexing sets for bases of $V$,  $A^\lambda$, and $B^\lambda$ respectively.

In \cite{robinson1938representations} Robinson (and later Schensted in \cite{schensted1961longest}) described a bijection between permutations in $S_n$ and pairs of standard Young tableaux, which explicitly provides the above bijection when $V=\C S_n$ is decomposed as an $S_n\times S_n$ module. In \cite{knuth1970permutations} Knuth generalized this process to one which takes in a biword and returns a pair of semistandard Young tableaux of the same shape, and this generalization is usually called the RSK algorithm. The RSK algorithm and its further generalizations provide bijections like \Cref{eq:general_bijection} explicitly in a number of contexts.

Generalizations of RSK also appear as bijections between pairs of bases of some associative algebras and Lie algebras. Super RSK describes a correspondence between objects called restricted superbiwords and semistandard supertableaux (see \cite{clark2015super,du2011quantum,grosshans1987invariant,scala2006super,marko2011bideterminants} for some examples of these combinatorial objects arising in the study of superalgebras and their representations). Although there are multiple bijections between these objects (see \cite{bonetti1988rs,shimozono2001color} for example), the one best suited to our purposes turns out to be that introduced in \cite{Muth2019Super}.

A variant of RSK which inserts arrays of multisets was introduced in \cite{Colmenarejo2020Insertion}, in which the authors used this algorithm to describe bijections like \Cref{eq:general_bijection} for decompositions of diagram algebras such as the partition algebra (introduced in \cite{jones1993potts,martin1994temperley}) and the multiset partition algebra (introduced in \cite{narayanan2019multiset,orellana2020howe}).

Diagram algebras have been used to address long-standing questions about the representation theory of the symmetric group. For example: 

\vspace{-.05in}

\begin{itemize}
    \item In \cite{bowman2015partition}, the authors use the partition algebra to compute a positive formula for some Kronecker coefficients.
    \item In \cite{orellana2021hopf,Orellana2015SymmetricGC} the authors use the partition algebra to introduce a basis of symmetric functions in order to study reduced Kronecker coefficients and restriction coefficients.
    \item In \cite{orellana2021uniform}, the authors study the uniform block permutation algebra and the connections between its characters and the plethysm operation on symmetric functions.
\end{itemize}

\vspace{-.05in}

This paper is organized as follows:

In \Cref{sec:emp_insertion}, we adapt the insertion algorithm for multiset partitions introduced in \cite{Colmenarejo2020Insertion} by replacing the underlying RSK algorithm with the super RSK algorithm of \cite{Muth2019Super}.

In \Cref{sec:epa}, we introduce the mixed multiset partition algebra $\MP_{\bfa,\bfb}(x)$, which generalizes the multiset partition algebras of \cite{narayanan2019multiset} and \cite{orellana2020howe}. We introduce a basis for the algebra and describe a diagrammatic product on this basis. When $x$ is specialized to an integer $n\geq2\abs{\bfa}+2\abs\bfb$, the algebra $\MP_{\bfa,\bfb}(n)$ is isomorphic to the centralizer $\End_{S_n}(\Sym^\bfa(V_n)\otimes\bigwedge^\bfb V_n)$ where $V_n$ is an $n$-dimension complex vector space.

In \Cref{sec:epa_reps} we construct the simple modules $\MP_{\bfa,\bfb}^\lambda$ of $\MP_{\bfa,\bfb}(n)$ for $n\geq2\abs{\bfa}+2\abs\bfb$. The super multiset partition insertion algorithm of \Cref{sec:emp_insertion} is an enumerative manifestation of the decomposition \begin{align*}
    \MP_{\bfa,\bfb}(n)\cong\bigoplus_{\lambda\in\Lambda^{\MP_{\bfa,\bfb}(n)}}(\MP_{\bfa,\bfb}^\lambda)^*\otimes\MP_{\bfa,\bfb}^\lambda
\end{align*} and as such is employed in this section to prove that a spanning set for $\MP_{\bfa,\bfb}^\lambda$ is in fact a basis.

Finally, in \Cref{sec:conclusions} we use these results to recover a combinatorial interpretation for the decomposition of a multivariate polynomial ring as an $S_n$-module given in \cite{orellana2020combinatorial} as well as provide an interpretation for its decomposition as a $GL_n$-module using analogous tableaux.

\section{Preliminaries and Definitions}
\subsection{Set Partitions and Multiset Partitions}

A \defn{set partition} $\rho$ of a set $S$ is a set of nonempty subsets of $S$ called \defn{blocks} whose disjoint union is $S$. We write $\ell(\rho)$ for the number of blocks of $\rho$. A \defn{multiset} of size $r$ from a set $S$ is a collection of $r$ unordered elements of $S$ which can be repeated. We will write multisets in $\multi{,}$ to differentiate them from sets and we will usually denote them by a capital letter with a tilde. We may write multisets using exponential notation $\tS=\multi{{s_1}^{m_1},\dots,{s_k}^{m_k}}$ where the multiplicity of the element $s_i$ is given by the exponent $m_i$. We write $m_{s_i}(\tS)=m_i$ for this multiplicity. Given multisets $\tS=\multi{{s_1}^{m_1},\dots,{s_k}^{m_k}}$ and $\tR=\multi{{s_1}^{n_1},\dots,{s_k}^{n_k}}$, write $\tS\uplus\tR=\multi{{s_1}^{m_1+n_1},\dots,{s_k}^{m_k+n_k}}$ for the union of the two multisets. A \defn{multiset partition} $\tilde\rho$ of a multiset $\tS$ is a multiset of multisets called blocks whose union is $\tS$. We write $\ell(\tilde\rho)$ for the number of blocks of $\trho$.

We will be interested in set and multiset partitions with elements from the following four alphabets of positive integers: \begin{align*}
    [r]=\{1,\dots,r\} && [\ov r]=\{\ov1,\dots,\ov r\}\\
    [\un r]=\{\un1,\dots,\un r\} && [\ovun r]=\{\ovun1,\dots,\ovun r\}
\end{align*}

We call the numbers $[\ov r]$ and $[\ovun r]$ \defn{barred}, and we call the numbers $[\un r]$ and $[\ovun r]$ \defn{underlined}. Within each alphabet we order the numbers as usual, and between alphabets we adopt the convention that  \[i<\ov j < \un k < \ovun m\] for all $i,j,k,$ and $m$.

A \defn{weak composition} of an integer $r$ of length $k$ is a sequence of $k$ non-negative integers which sum to $r$. Write $W_{r,k}$ for the set of weak compositions of $r$ of length $k$. For $\bfa\in W_{r,k}$, write $\bfa_i$ for the $i$th number in the sequence. We say that $\bfa$ is less than or equal to $\bfb$ in the \defn{dominance order}, written $\bfa\trianglelefteq\bfb$, if \[\bfa_1+\dots+\bfa_i\leq \bfb_1+\dots+\bfb_i\] for all $1\leq i\leq k$. For $\bfa\in W_{r,k}$, write $[k]^{\bfa}=\multi{1^{\bfa_1},\dots,k^{\bfa_k}}$.

For $\bfa\in W_{r,k}$ and $\bfb\in W_{s,m}$, we define the following sets of set and multiset partitions where in each set we assume that no barred entry is ever repeated in a block of a multiset partition. Let 
\begin{itemize}
    \item $\Pi_{2r}$ be the set of set partitions of $[r]\cup[\un r]$.
    \item $\Pi_{2(\bfa,\bfb)}$ be the set of multiset partitions of $[k]^\bfa\uplus[\ov m]^\bfb\uplus[\un k]^\bfa\uplus [\ovun m]^\bfb$.
    \item $\hat\Pi_{2(\bfa,\bfb)}$ be the set of multiset partitions in $\Pi_{2(\bfa,\bfb)}$ where repeated blocks have an even number of barred entries. We call these \defn{restricted multiset partitions}.
\end{itemize}

\begin{example}
    \begin{align*}
        \{\{1,\un1\},\{2,3,\un2,\un3\}\}&\in\Pi_{2(3)}\\
        \multi{\multi{1,\ov1,\un1,\un2},\multi{2,\ov1},\multi{\ovun1},\multi{\ovun1}}&\in\Pi_{2((1,1),(2,0))}\\
        \multi{\multi{1,\un1,\un2},\multi{2},\multi{\ov1,\ovun1},\multi{\ov1,\ovun1}}&\in\hat\Pi_{2((1,1),(2,0))}
    \end{align*}
\end{example}

A block of a set or multiset partition is called \defn{propagating} if it contains both an underlined element and a non-underlined element.

Let $\tS$ and $\tR$ be multisets from $[k]$. We say that $\tS$ is less than $\tR$ in the \defn{last-letter order}, written $\tS<\tR$, if one of the following conditions hold: (i) $\tS$ is empty and $\tR$ is not, (ii) $\max(\tS)<\max(\tR)$, or (iii) $\max(\tS)=\max(\tR)=u$ and $\tS\smallsetminus\{u\}<\tR\smallsetminus\{u\}$.

For $\bfa\in W_{r,k}$ and $\bfb\in W_{s,m}$, define the \defn{coloring map} $\kappa_{\bfa,\bfb}:[r+s]\to [k]\cup[\ov m]$ by \[\kappa_{\bfa,\bfb}(i)=\begin{cases}
    1 & 1\leq i \leq \bfa_1\\
    2 & \bfa_1<i\leq \bfa_1+\bfa_2\\
    \vdots & \\
    k & \bfa_1+\dots+\bfa_{k-1}<i\leq r\\
    \ov1 & r < i \leq r + \bfb_1\\
    \vdots & \\
    \ov m & r+\bfb_1+\dots+\bfb_{m-1} < i \leq r+s
\end{cases}.\] We can extend this map to a map from subsets of $[r+s]$ to multisets with elements in $[k]\cup[\ov m]$ where $\kappa_{\bfa,\bfb}(\{i_1,\dots,i_\ell\})=\multi{\kappa_{\bfa,\bfb}(i_1),\dots,\kappa_{\bfa,\bfb}(i_\ell)}$. Finally, we can extend $\kappa_{\bfa,\bfb}$ to $\Pi_{2(r+s)}$ by applying $\kappa_{\bfa,\bfb}$ to each block where we set $\kappa_{\bfa,\bfb}(\un i)=\un{\kappa_{\bfa,\bfb}(i)}$. Note that each $\tpi\in\Pi_{2(\bfa,\bfb)}$ is in the image $\kappa_{\bfa,\bfb}(\Pi_{2r})$, but this image also contains multiset partitions with blocks having repeated barred elements.

\begin{example}\label{ex:coloring_sp} Here we give an example of a set partition which does map to an element of $\Pi_{2(\bfa,\bfb)}$ under the coloring map.
    \begin{align*}
        \kappa_{(2,1),(0,2)}&\left(\{\{1,2,5\},\{3,4,\un3\},\{\un2,\un4\},\{\un1,\un5\}\}\right)\\
        &=\multi{\multi{1,1,\ov2},\multi{2,\ov2,\un2},\multi{\un1,\ovun2},\multi{\un1,\ovun2}}
    \end{align*}
\end{example}

Let $\tpi\in\Pi_{2(\bfa,\bfb)}$ and let $\pi\in\kappa_{\bfa,\bfb}^{-1}(\tpi)$ be such that if $\kappa_{\bfa,\bfb}(i)=\kappa_{\bfa,\bfb}(j)$ and $i<j$, then the block containing $i$ in $\pi$ is weakly before the block containing $j$ in the last-letter order. The unique such element is called the \defn{standardization} of $\tpi$.

\begin{example} The standardization of a multiset partition can be computed by first putting its blocks in last-letter order, then for each value, replacing it with its pre-image under $\kappa_{\bfa,\bfb}$ increasing left-to-right.
    \newcommand{\hl}[1]{\textcolor{c1}{#1}}

\begin{align*}
        \multi{\multi{1,1,\ov2},\multi{2,\ov2,\un2},\multi{\un1,\ovun2},\multi{\un1,\ovun2}}\\
        \multi{\multi{\hl1,\hl2,\ov2},\multi{2,\ov2,\un2},\multi{\un1,\ovun2},\multi{\un1,\ovun2}}\\
        \multi{\multi{\hl1,\hl2,\ov2},\multi{\hl3,\ov2,\un2},\multi{\un1,\ovun2},\multi{\un1,\ovun2}}\\
        \multi{\multi{\hl1,\hl2,\hl4},\multi{\hl3,\hl5,\un2},\multi{\un1,\ovun2},\multi{\un1,\ovun2}}\\
        \multi{\multi{\hl1,\hl2,\hl4},\multi{\hl3,\hl5,\un2},\multi{\hl{\un1},\ovun2},\multi{\hl{\un2},\ovun2}}\\
        \multi{\multi{\hl1,\hl2,\hl4},\multi{\hl3,\hl{\un3}},\multi{\hl{\un1},\ovun2},\multi{\hl{\un2},\ovun2}}\\
        \{\{\hl1,\hl2,\hl4\},\{\hl3,\hl5,\hl{\un3}\},\{\hl{\un1},\hl{\un4}\},\{\hl{\un2},\hl{\un5}\}\}
\end{align*}
\end{example}

\begin{remark}\label{rmk:standard_content} If $\pi=\{A_1,\dots,A_\ell\}$ is the standardization of $\kappa_{\bfa,\bfb}(\pi)=\tpi$, then its blocks satisfy \[A_i<A_j\implies\kappa_{\bfa,\bfb}(A_i)\leq\kappa_{\bfa,\bfb}(A_j).\]
\end{remark}

\subsection{Diagrams}

For a set partition $\pi\in\Pi_{2r}$, there is a classical graph-theoretic representation of $\pi$ on two rows of vertices with the top row being labeled $1$ through $r$ and the bottom being labeled $\un1$ through $\un{r}$. Two vertices of this graph are connected if and only if their labels are in the same block of $\pi$. 

\begin{example} The set partition $\pi=\{\{1,\un1,\un2,\un3\},\{2,3\},\{\un4\},\{4,5,\un5\}\}$ could be represented by either of the following two graphs.
    \begin{align*}
        \begin{array}{c}
            \begin{tikzpicture}[xscale=.5,yscale=.5,line width=1.25pt] 
                \foreach \i in {1,2,3,4,5}  { \path (\i,1.25) coordinate (T\i); \path(\i,1.8) node {$\i$}; \path (\i,.25) coordinate (B\i); \path(\i,-.3) node {$\un{\i}$}; } 
                \filldraw[fill= black!12,draw=black!12,line width=4pt]  (T1) -- (T5) -- (B5) -- (B1) -- (T1);
                \draw[black] (T1)--(B3)--(B2)--(B1)--(T1);
                \draw[black] (T2)--(T3);
                \draw[black] (T4)--(T5)--(B5)--(T4);
                \colortop{1,2,3,4,5}{black}
                \colorbot{1,2,3,4,5}{black}
            \end{tikzpicture}
        \end{array} && \begin{array}{c}
            \begin{tikzpicture}[xscale=.5,yscale=.5,line width=1.25pt] 
                \foreach \i in {1,2,3,4,5}  { \path (\i,1.25) coordinate (T\i); \path(\i,1.8) node {$\i$}; \path (\i,.25) coordinate (B\i); \path(\i,-.3) node {$\un{\i}$}; } 
                \filldraw[fill= black!12,draw=black!12,line width=4pt]  (T1) -- (T5) -- (B5) -- (B1) -- (T1);
                \draw[black] (T1)--(B3)--(B2)--(B1)--(T1);
                \draw[black] (T1)--(B2);
                \draw[black] (T2)--(T3);
                \draw[black] (T5)--(B5)--(T4);
                \colortop{1,2,3,4,5}{black}
                \colorbot{1,2,3,4,5}{black}
            \end{tikzpicture}
        \end{array}
    \end{align*}
\end{example}

Because there are many graphs that represent the set partition $\pi$, we consider two graphs equivalent if their connected components give rise to the same set partition. The \defn{diagram} of $\pi$ is the equivalence class of graphs with the same connected components.

We can similarly consider a graph-theoretic representation of any multiset partition $\tpi\in\Pi_{2(\bfa,\bfb)}$ with $\bfa\in W_{r,k}$ and $\bfb\in W_{s,m}$. This time we place $r+s$ vertices on the top labeled by $[k]^\bfa\uplus[\ov m]^\bfb$ in weakly increasing order and place $r+s$ vertices on the bottom labeled by $[\un k]^\bfa\uplus[\ovun m]^\bfb$ in weakly increasing order. We then connect the vertices in any way so that the labeled connected components taken together are $\tpi$.

\begin{example}\label{MSPDiagramExample} The multiset partition \[\tpi=\multi{\multi{1,1},\multi{1,\un1,\un1,\un1},\multi{\ov1},\multi{2,\ov1,\ovun1},\multi{\un2,\ovun1}}\] could be represented by either of the following graphs:
    \begin{align*}
    \begin{array}{c}
            \begin{tikzpicture}[xscale=.5,yscale=.5,line width=1.25pt] 
                \foreach \i in {1,2,3,4,5,6}  { \path (\i,1.25) coordinate (T\i); \path (\i,.25) coordinate (B\i); } 
                \path(1,1.8) node {$1$};
                \path(2,1.8) node {$1$};
                \path(3,1.8) node {$1$};
                \path(4,1.8) node {$2$};
                \path(5,1.8) node {$\ov1$};
                \path(6,1.8) node {$\ov1$};
                \path(1,-.3) node {$\un1$};
                \path(2,-.3) node {$\un1$};
                \path(3,-.3) node {$\un1$};
                \path(4,-.3) node {$\un2$};
                \path(5,-.3) node {$\ovun1$};
                \path(6,-.3) node {$\ovun1$};
                \filldraw[fill= black!12,draw=black!12,line width=4pt]  (T1) -- (T6) -- (B6) -- (B1) -- (T1);
                \draw[black] (B3)--(B2)--(B1)--(T1);
                \draw[black] (T2)--(T3);
                \draw[black] (T4)--(T5)--(B6);
                \draw[black] (B4)--(B5);
                \colortop{1,2,3}{c1}
                \colortop{4}{c2}
                \colortopalt{5,6}{c3}
                \colorbot{1,2,3}{c1}
                \colorbot{4}{c2}
                \colorbotalt{5,6}{c3}
            \end{tikzpicture}
        \end{array} && \begin{array}{c}
            \begin{tikzpicture}[xscale=.5,yscale=.5,line width=1.25pt] 
                \foreach \i in {1,2,3,4,5,6}  { \path (\i,1.25) coordinate (T\i); \path (\i,.25) coordinate (B\i); } 
                \path(1,1.8) node {$1$};
                \path(2,1.8) node {$1$};
                \path(3,1.8) node {$1$};
                \path(4,1.8) node {$2$};
                \path(5,1.8) node {$\ov1$};
                \path(6,1.8) node {$\ov1$};
                \path(1,-.3) node {$\un1$};
                \path(2,-.3) node {$\un1$};
                \path(3,-.3) node {$\un1$};
                \path(4,-.3) node {$\un2$};
                \path(5,-.3) node {$\ovun1$};
                \path(6,-.3) node {$\ovun1$};
                \filldraw[fill= black!12,draw=black!12,line width=4pt]  (T1) -- (T6) -- (B6) -- (B1) -- (T1);
                \draw[black] (B1)--(B2)--(B3)--(T3);
                \draw[black] (T1)--(T2);
                \draw[black] (T4)--(T5)--(B5);
                \draw[black] (B4)  .. controls +(0,+.6) and +(0,+.6) .. (B6);
                \colortop{1,2,3}{c1}
                \colortop{4}{c2}
                \colortopalt{5,6}{c3}
                \colorbot{1,2,3}{c1}
                \colorbot{4}{c2}
                \colorbotalt{5,6}{c3}
            \end{tikzpicture}
        \end{array}
    \end{align*}
\end{example}

Again the \defn{diagram} of $\tpi$ is the equivalence class of graphs whose labeled connected components give $\tpi$.

We will often drop the labels on these graphs. A set partition diagram will be distinguished by the black color of its vertices, and a multiset partition diagram will be distinguished by its colored vertices. Its vertices are understood to be labeled with \textcolor{c1}{\textbf{blue}}, \textcolor{c2}{\textbf{orange}}, \textcolor{c3}{\textbf{green}}, and \textcolor{c4}{\textbf{purple}} representing 1, 2, $\ov1$, and $\ov2$ respectively, where the vertices labeled with barred elements are further distinguished by being drawn as open circles.

The multiset partition diagram for $\tpi\in\Pi_{2(\bfa,\bfb)}$ drawn in \defn{standard form} is the diagram for the standardization $\pi$ of $\tpi$ with vertices labeled $i$ colored with the color $\kappa_{\bfa,\bfb}(i)$. The right-most diagram in \Cref{MSPDiagramExample} is drawn in standard form.

\subsection{Tableaux}

A \defn{partition} of $n$ is a weakly-decreasing sequence $\lambda$ of positive integers called parts which sum to $n$. We write $\lambda\vdash n$ to mean that $\lambda$ is a partition of $n$. We write $\lambda_i$ for the $i$th element of the sequence $\lambda$, called the $i$th part of $\lambda$. Given a partition $\lambda$, its \defn{Young diagram} is an array of left-justified boxes where the $i$th row from the bottom has $\lambda_i$ boxes. For example, the Young diagram of $(5,3,3,1)\vdash 12$ is \begin{align*}
    \yng(5,3,3,1).
\end{align*}

When we refer to the $i$th row of a Young diagram, we mean the $i$th row from the bottom, which corresponds to the $i$th part of $\lambda$.

A \defn{standard Young tableau} $t$ of shape $\lambda\vdash n$ is a filling of these boxes with the numbers $[n]$ whose rows increase left-to-right and columns increase bottom-to-top. Write $\SYT(\lambda)$ for the set of standard Young tableaux of shape $\lambda$.

A \defn{semistandard Young tableau} is a filling of the Young diagram of $\lambda$ with positive integers with rows increasing left-to-right and columns increasing \textit{weakly} bottom-to-top. Write $\SSYT(\lambda,r)$ for the set of semistandard Young tableaux of shape $\lambda$ and maximum entry $r$.

A \defn{multiset partition tableau} $\tT$ of shape $\lambda$ is a filling of the Young diagram of $\lambda$ with multisets along with at least $\lambda_2$ empty boxes in the first row (so that no nonempty box in the first row is adjacent to a box in the second row). The \defn{content} of a tableau $\tT$ is the multiset partition consisting of the contents of non-empty boxes of $\tT$. For $\bfa\in W_{r,k}$ and $\bfb\in W_{s,m}$, write $\MT(\lambda,\bfa,\bfb)$ for the set of multiset partition tableaux of shape $\lambda$ with content a multiset partition of $[k]^\bfa\uplus[\ov m]^\bfb$.

A \defn{standard multiset partition tableau} is a multiset partition tableau $T$ whose content is a set partition and whose rows increase left-to-right and columns increase bottom-to-top with respect to the last-letter order. Write $\SMT(\lambda,r)$ for the set of standard multiset partition tableaux of shape $\lambda$ and content a set partition of $[r]$.

Finally, a \defn{semistandard multiset partition tableau} $\hat T$ is a multiset partition tableau such that:
\begin{enumerate}
    \item The rows and columns of $\hat T$ weakly increase in the last-letter order
    \item For two identical blocks $B$ and $C$ of the content of $\hat T$,
    \begin{enumerate}
        \item If the blocks contain an even number of barred entries, then $B$ and $C$ cannot be placed in the same column.
        \item If the blocks contain an odd number of barred entries, then $B$ and $C$ cannot be placed in the same row.
    \end{enumerate}
\end{enumerate}

Write $\SSMT(\lambda,\bfa,\bfb)$ for the set of semistandard multiset partition tableaux of shape $\lambda$ with content a multiset partition of $[k]^\bfa\uplus[\ov m]^\bfb$. 

\begin{example} Examples of these classes of tableaux of the same shape $\lambda=(5,2,1)$:
    \begin{align*}
        \begin{array}{ll}
            \inline{\young(12478,35,6)}\!\!\!\in\SYT(\lambda) & \inline{\young(11233,24,3)}\!\!\!\in\SSYT(\lambda, 4) \\ \\ \inline{\young(\;\;\;<24><9>,<35><68>,<17>)}\!\!\!\in\SMT(\lambda,9) &
            \inline{\young(\;\;\;<\ov1\ov2><\ov1\ov2>,<1\ov2>2,<1\ov2>)}\!\!\!\in\SSMT(\lambda,(2,1),(2,4))
        \end{array}
    \end{align*}
\end{example}

Given $T\in\SMT(\lambda,r+s)$ and $\bfa\in W_{r,k},\bfb\in W_{s,m}$, write $\kappa_{\bfa,\bfb}(T)$ for the result of applying $\kappa_{\bfa,\bfb}$ to the content of each box of $T$. For a multiset partition tableau $\tT\in\MT(\lambda,\bfa,\bfb)$, define its \defn{standardization} to be the unique $T\in\kappa_{\bfa,\bfb}^{-1}(\tT)$ whose content is the standardization of the content of $T$ such that for any two boxes with contents $A<B$ with $\kappa_{\bfa,\bfb}(A)=\kappa_{\bfa,\bfb}(B)$, the box containing $A$ is weakly to the right and weakly below the box containing $B$.

\begin{example} Here we compute the standardization $T$ of a semistandard multiset partition tableau $\tT$. At each step, we simultaneously replace all boxes with a particular content, proceeding in the last-letter order.

\newcommand\hlcell{\Yfillcolour{c3!35}}
\newcommand\whcell{\Yfillcolour{white}}

\begin{align*}
    \begin{array}{ccccc}
       \tT=\inline{\young(11<\ov1>,<2><1\ov2>,<\ov1><1\ov2>)}  &  \inline{\young(!\hlcell12!\whcell<\ov1>,<2><1\ov2>,<\ov1><1\ov2>)}  & \inline{\young(!\hlcell12!\whcell<\ov1>,!\hlcell<5>!\whcell<1\ov2>,<\ov1><1\ov2>)} &
        \inline{\young(!\hlcell126,<5>!\whcell<1\ov2>,!\hlcell7!\whcell<1\ov2>)} & T=\inline{\young(!\hlcell126,5<38>,7<49>)} 
    \end{array}
\end{align*}
\end{example}

\begin{definition}
    For multiset partition tableaux $\tT$ and $\tS$, let $\{\tB_1,\dots,\tB_\ell\}$ be the set of distinct contents of the boxes of $\tT$ and $\tS$ in last-letter order. For $1\leq i\leq \ell$, define the weak composition $\alpha^{\tB_i}(\tT)$ by \begin{align*}
        \alpha^{\tB_i}(\tT)_j=\#\text{boxes with content $\leq \tB_i$ in column $j$}.
    \end{align*}

    We say that $\tT$ is larger than $\tS$ in the \defn{column dominance order}, written $\tS\triangleleft \tT$, if $\alpha^{\tB_i}(\tS)\triangleleft\alpha^{\tB_i}(\tT)$ for all $1\leq i\leq\ell$.
\end{definition}

\begin{example} Two semistandard multiset partition tableaux compared in the column-dominance order:
    \begin{align*}
    \begin{array}{cccc}
         & \inline{\young(112,<\ov1><1\ov2>,<\ov1><1\ov2>)} & \lhd & \inline{\young(11<\ov1>,2<1\ov2>,<\ov1><1\ov2>)} \\ \vspace{-.1in} \\ \hline \vspace{-.1in} \\
         \alpha^{1} & (1,1,0) & & (1,1,0)\\
         \alpha^{2} & (1,1,1) & & (2,1,0)\\
         \alpha^{\ov1} & (3,1,1) & & (3,1,1)\\
         \alpha^{1\ov2} & (3,3,1) & & (3,3,1)
    \end{array}
\end{align*}
\end{example}

\begin{remark}\label{rmk:col_dominance}
    If the content of $T$ is the standardization of a multiset partition as in \Cref{rmk:standard_content}, then $\alpha^{\tilde B}(\kappa_{\bfa,\bfb}(T))=\alpha^B(T)$ where $B$ is the largest block of the content of $T$ such that $\kappa_{\bfa,\bfb}(B)=\tilde{B}$. It's then clear to see that if $T$ and $S$ have this property and $T\lhd S$, then $\kappa_{\bfa,\bfb}(T)\lhd\kappa_{\bfa,\bfb}(S)$.
\end{remark}

\begin{example} Here we show the composition sequence for a semistandard multiset partition tableau along with that of its standardization:
    \begin{align*}
    \begin{array}{cccc}
         & \inline{\young(11<\ov1>,2<1\ov2>,<\ov1><1\ov2>)} &  & \inline{\young(126,5<38>,7<49>)} \\ \vspace{-.1in} \\ \hline \vspace{-.1in} \\
         & & \alpha^{1} & (1,0,0)\\
         \alpha^{1} & (1,1,0) & \alpha^{2} & (1,1,0)\\
         \alpha^{2} & (2,1,0) & \alpha^{5} & (2,1,0)\\
          &  & \alpha^{6} & (2,1,1)\\
         \alpha^{\ov1}& (3,1,1) & \alpha^{7} & (3,1,1)\\
         & & \alpha^{38} & (3,2,1)\\
         \alpha^{1\ov2} & (3,3,1) & \alpha^{49} & (3,3,1)
    \end{array}
\end{align*}
\end{example}

\subsection{Symmetric Functions}

Symmetric functions are a powerful tool for studying the representation theory of $S_n$ and $GL_n$. The ring of symmetric functions consists of formal power series in an infinite number of variables $X=\{x_1,x_2,\dots\}$ which are fixed under any permutation of a finite number of the variables. A special class of symmetric functions are the \defn{Schur functions} $s_\lambda$ for $\lambda$ a partition. The Schur functions are defined by \begin{align*}
    s_\lambda=\sum_{T} x^T
\end{align*} where the sum is taken over all semistandard Young tableaux of shape $\lambda$ and $x^T$ is the monomial where the exponent of $x_i$ is the number of times $i$ occurs in the tableau $T$.

When all but the first $n$ variables in the Schur function $s_\lambda$ are set to zero, we obtain a polynomial $s_\lambda(X_n)$ in the variables $X_n=\{x_1,\dots,x_n\}$. Given a matrix $A\in GL_n$, evaluating $s_\lambda(X_n)$ at the eigenvalues of $A$ gives the character value for the simple polynomial representation $GL_n^\lambda$. Using this fact, we will use symmetric function identities in \Cref{sec:GLn_module} to describe the decomposition of a polynomial $GL_n$-module into simple modules. To that end, we introduce two special cases of Schur functions and the simple modules they compute the characters of: The \defn{elementary symmetric function} \begin{align*}
    e_k=s_{(1^k)}=\sum_{i_1<i_2<\dots<i_k} x_{i_1}x_{i_2}\cdots x_{i_k}
\end{align*} corresponds to the exterior power $\bigwedge\nolimits^k V_n$ and the \defn{complete homogeneous symmetric function} \begin{align*}
    h_k=s_{(k)}=\sum_{i_1\leq i_2\leq\dots\leq i_k} x_{i_1}x_{i_2}\cdots x_{i_k}
\end{align*} corresponds to the symmetric power $\Sym^k(V_n)$.

\subsection{Mutual Centralizers and the Partition Algebra}

One natural place that decompositions like (\ref{eq:decomp}) appear is the setting of mutually centralizing actions. Let $V_n=\C^n$ be an $n$-dimensional vector space over $\C$. A prototypical example of mutually centralizing actions is classical Schur--Weyl duality of the general linear group $GL_n$ and the symmetric group $S_r$ acting on the tensor power ${V_n}^{\otimes r}$. The general situation is given in the following theorem called the double centralizer theorem. For a semisimple algebra $A$, write \defn{$\Lambda^A$} for an indexing set of simple $A$-modules.

\begin{theorem} \cite[Section 6.2.5]{procesi}\cite[Section 4.2.1]{gw}\label{thm:dct}Let $A$ be a semisimple algebra acting faithfully on a module $V$ and set $B=\End_A(V)$. Then $B$ is semisimple and $\End_B(V)\cong A$. Furthermore, there is a set $P$ (a subset of the indexing set of the simple $A$-modules) such that for each $x\in P$, $A^x$ is a simple $A$-module occurring in the decomposition of $V$ as an $A$-module. If we set $B^x=\Hom(A^x, V)$, then $B^x$ is a simple $B$-module and the decomposition of $V$ as an $A\times B$-module is \[V\cong\bigoplus_{x\in P}A^x\otimes B^x.\] Moreover, the dimension of $A^x$ is equal to the multiplicity of $B^x$ in $V$ as a $B$-module and the dimension of $B^x$ is equal to the multiplicity of $A^x$ in $V$ as an $A$-module.
\end{theorem}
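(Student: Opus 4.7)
The plan is to use the standard decomposition of a module over a semisimple algebra into isotypic components and then read off the structure of the centralizer. Since $A$ is semisimple, every $A$-module is completely reducible, so I would first write
\[
V \cong \bigoplus_{x\in P} A^x \otimes M_x
\]
as $A$-modules, where the sum ranges over those $x$ such that $A^x$ actually appears in $V$, and the multiplicity space is $M_x = \Hom_A(A^x, V)$. Here $A$ acts only on the $A^x$ factors.

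Next I would compute $B = \End_A(V)$ using this decomposition. Since $A$-module maps preserve isotypic components and, by Schur's lemma over $\C$, $\Hom_A(A^x, A^y) = 0$ for $x \neq y$ and $\End_A(A^x) \cong \C$, I get
\[
B = \End_A(V) \cong \bigoplus_{x \in P} \End_\C(M_x).
\]
This is a direct sum of full matrix algebras, hence semisimple, with simple modules the spaces $M_x$ themselves. Identifying $M_x$ with $B^x = \Hom_A(A^x, V)$ and tracing through the isomorphism shows that $V \cong \bigoplus_x A^x \otimes B^x$ as an $A\times B$-module, and the dimension-multiplicity statements then follow immediately from this decomposition.

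Finally I would prove $\End_B(V) \cong A$ by running the same argument symmetrically: with respect to the decomposition above, $B$ acts trivially on the $A^x$ factors and irreducibly on each $M_x$, so Schur's lemma yields
\[
\End_B(V) \cong \bigoplus_{x\in P} \End_\C(A^x).
\]
The natural map $A \to \bigoplus_x \End_\C(A^x)$ induced by the action on the isotypic pieces lands in $\End_B(V)$, and the faithfulness hypothesis together with semisimplicity of $A$ (which forces $A \cong \bigoplus_{x\in \Lambda^A}\End_\C(A^x)$, with all $x$ appearing since $A$ acts faithfully on $V$, forcing $P = \Lambda^A$ in this identification) makes this map an isomorphism.

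The main obstacle, and the only place real care is needed, is the last step: one has to argue that faithfulness of the $A$-action on $V$ implies that every simple $A$-module actually occurs in $V$, so that the image of $A$ fills up $\bigoplus_{x\in P} \End_\C(A^x)$ rather than a proper subalgebra. Once this subtlety is handled via the Artin-Wedderburn structure of $A$, everything else is a direct application of Schur's lemma to the isotypic decomposition.
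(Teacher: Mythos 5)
Your argument is the standard proof of the double centralizer theorem via isotypic decomposition and Schur's lemma, and it is correct, including the one genuinely delicate point: that faithfulness forces every Wedderburn block of $A$ to act nontrivially, so $P=\Lambda^A$ and the map $A\to\End_B(V)$ is onto rather than landing in a proper subalgebra. The paper does not prove this theorem itself but cites it to Procesi and Goodman--Wallach, and your proof is essentially the one found in those references, so there is nothing further to reconcile.
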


We are interested in applying the double centralizer theorem to the following situation. Let \[e\in\End_{GL_n}({V_n}^{\otimes r})\cong\C S_r\] be an idempotent. Then $e{V_n}^{\otimes r}$ is a $GL_n$-module, and hence by restriction to the $n\times n$ permutation matrices an $S_n$-module. Then \[B=\End_{S_n}(e{V_n}^{\otimes r})\cong e\End_{S_n}({V_n}^{\otimes r})e\] is a semisimple algebra. The following is a well-known fact (see e.g. \cite[Theorem 1.10.14]{linckelmann_2018}), which will help us study the simple $B$-modules.

\begin{prop} \label{prop:projected_irreps} Let $A$ be an algebra and $e\in A$ an idempotent. Then $eAe$ is an algebra with identity $e$, and the following can be said about its simple modules.
\begin{enumerate}
    \item If $S$ is a simple $A$-module, then $eS$ is either zero or a simple $eAe$-module.
    \item All simple $eAe$-modules arise in this way. That is, if $T$ is a simple $eAe$-module, then there is a simple $A$-module $S$ such that $eS\cong T$.
\end{enumerate}
\end{prop}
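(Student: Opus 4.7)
The statement splits naturally into three pieces: that $eAe$ is an algebra with identity $e$, the assignment $S \mapsto eS$ in part~(1), and the existence of a preimage simple $A$-module in part~(2). The first is immediate: for $x = eae$ and $y = ebe$, we have $xy = eaebe \in eAe$, and $e \cdot eae = eae = eae \cdot e$, so $e$ is a two-sided identity of the subalgebra $eAe$.

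For part~(1), I would fix a simple $A$-module $S$ with $eS \neq 0$ and pick any nonzero $es \in eS$. Simplicity of $S$ gives $A \cdot es = S$. Left-multiplying by $e$ and using $es = e(es)$ yields
\[eS \;=\; eA \cdot es \;=\; (eAe) \cdot es,\]
so every nonzero element of $eS$ generates $eS$ as an $eAe$-module, and $eS$ is simple.

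The bulk of the work is part~(2). Given a simple $eAe$-module $T$, the plan is to induce up by forming $M := Ae \otimes_{eAe} T$ as a left $A$-module, where $Ae$ carries its natural $(A, eAe)$-bimodule structure. A direct check shows $eM = \{\, e \otimes t : t \in T \,\}$ is isomorphic to $T$ as an $eAe$-module via $t \mapsto e \otimes t$, with inverse $ae \otimes t \mapsto (eae)\,t$, the point being that any $em \in eM$ can be rewritten across the tensor as $e \otimes (eae)t$. Let $N_0$ be the sum of all $A$-submodules $N \subseteq M$ with $eN = 0$; then $eN_0 = 0$, and I set $S := M/N_0$, which still satisfies $eS \cong T$.

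To see that $S$ is simple, I would use two observations. First, $M$ is $A$-generated by $eM$, because every simple tensor $(ae) \otimes t = a \cdot (e \otimes t)$ lies in $A \cdot eM$; consequently $S$ is $A$-generated by $eS$. Second, for any proper submodule $\bar N \subsetneq S$, the $eAe$-submodule $e\bar N \subseteq eS \cong T$ is either $0$ or all of $eS$ by simplicity of $T$; in the latter case $\bar N \supseteq eS$ forces $A \cdot \bar N \supseteq A \cdot eS = S$, contradicting properness. Hence $e\bar N = 0$, which lifts to a submodule of $M$ properly containing $N_0$ and annihilated by $e$, contradicting the maximality of $N_0$ unless $\bar N = 0$. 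The main obstacle is precisely this third part: setting up the induced module $M$, identifying $eM$ with $T$ via the tensor relation, and then juggling the maximal $e$-annihilated submodule through the quotient to pin down simplicity of $S$.
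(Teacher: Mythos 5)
Your proof is correct. The paper does not actually prove this proposition; it is quoted with a citation to Linckelmann, and your argument is essentially the standard one found there: part (1) via $eA\cdot es=(eAe)\cdot es$, and part (2) via the induced module $Ae\otimes_{eAe}T$, killing the largest submodule annihilated by $e$. Two small points you gloss over but which are immediate: the inverse map $ae\otimes t\mapsto(eae)t$ must be checked to be balanced over $eAe$ (it is, since $(e\,ae\,c)t=(eae)(ct)$ for $c\in eAe$), and in the final step the lift $N$ of $\bar N$ satisfies only $eN\subseteq N_0$ a priori, from which $eN=e(eN)\subseteq eN_0=0$ by idempotency, so $N\subseteq N_0$ and $\bar N=0$. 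With those one-line checks supplied, the argument is complete.
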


When $n\geq 2r$, the centralizer algebra $\End_{S_n}({V_n}^{\otimes r})$ is isomorphic to the Partition algebra $P_r(n)$ introduced by Jones \cite{jones1993potts} and Martin \cite{martin1994temperley} as a generalization of the Temperley-Lieb algebra and the Potts model in statistical mechanics, so the algebra $B$ is given by $B\cong e P_r(n) e$. We devote the remainder of this section to describing the structure of $P_r(n)$ and its simple modules for $n\geq 2r$.

\subsubsection{Diagram Basis of $P_r(x)$ and $S_r$ actions}
For $r$ a positive integer and an indeterminate $x$, the partition algebra $P_r(x)$ is an associative algebra over $\C(x)$. When $x$ is specialized to an integer $n\geq 2r$, the algebra $P_r(n)$ is isomorphic to the algebra of endomorphisms $\End_{S_n}({V_n}^{\otimes r})$ of ${V_n}^{\otimes r}$ as an $S_n$-module. The partition algebra has a basis called the \defn{diagram basis} $\{\LL_\pi:\pi\in\Pi_{2r}\}$ whose product has a combinatorial interpretation in terms of partition diagrams. To compute the product of $\LL_\pi$ and $\LL_\nu$, place a graph representing $\pi$ above one representing $\nu$ and identify the vertices on the bottom of $\pi$ with the corresponding vertices of $\nu$ to create a three-tiered diagram. Let $\pi\circ\nu$ be the restriction of this diagram to the very top and very bottom, preserving which vertices are connected in the larger diagram and let $c(\pi,\nu)$ be the number of components entirely in the middle of the three-tiered diagram. Then $\LL_\pi\LL_\nu=x^{c(\pi,\nu)}\LL_{\pi\circ\nu}$.

\begin{example} Here we show the product of two diagram basis elements. Notice that two components are entirely in the middle, giving a coefficient of $x^2$.
    \[\begin{array}{c}
        \begin{tikzpicture}[xscale=.5,yscale=.5,line width=1.25pt] 
        \foreach \i in {1,2,3,4,5,6,7}  { \path (\i,1.25) coordinate (T\i); \path (\i,.25) coordinate (B\i); } 
            \filldraw[fill= black!12,draw=black!12,line width=4pt]  (T1) -- (T7) -- (B7) -- (B1) -- (T1);
            \draw[black] (T1)--(T2)--(B1)--(T1);
            \draw[black] (T3)--(B2);
            \draw[black] (T4)--(T5)--(B4)--(T4);
            \draw[black] (T7)--(B7);
            \draw[c1] (B3) .. controls +(0,+.35) and +(0,+.35) .. (B5);
            \colortop{1,...,7}{black}
            \colorbot{1,2,4,7}{black}
            \colorbot{3,5,6}{c1}
        \end{tikzpicture} \\
       \begin{tikzpicture}[xscale=.5,yscale=.5,line width=1.25pt] 
        \foreach \i in {1,2,3,4,5,6,7}  { \path (\i,1.25) coordinate (T\i); \path (\i,.25) coordinate (B\i); } 
            \filldraw[fill= black!12,draw=black!12,line width=4pt]  (T1) -- (T7) -- (B7) -- (B1) -- (T1);
            \draw[black] (T1)--(B1)--(B2)--(T1);
            \draw[black] (B3)--(B5);
            \draw[black] (T7)--(B7)--(B6)--(T7);
            \draw[black] (T2) .. controls +(0,-.35) and +(0,-.35) .. (T4);
            \draw[c1] (T3) .. controls +(0,-.35) and +(0,-.35) .. (T5);
            \colortop{1,2,4,7}{black}
            \colortop{3,5,6}{c1}
            \colorbot{1,...,7}{black}
        \end{tikzpicture}
    \end{array}=\textcolor{c1}{x^2}\begin{array}{c}
        \begin{tikzpicture}[xscale=.5,yscale=.5,line width=1.25pt] 
        \foreach \i in {1,2,3,4,5,6,7}  { \path (\i,1.25) coordinate (T\i); \path (\i,.25) coordinate (B\i); } 
            \filldraw[fill= black!12,draw=black!12,line width=4pt]  (T1) -- (T7) -- (B7) -- (B1) -- (T1);
            \draw[black] (T1)--(T2)--(B2)--(B1)--(T1);
            \draw[black] (T3)--(T5);
            \draw[black] (B3)--(B5);
            \draw[black] (T7)--(B7)--(B6)--(T7);
            \colortop{1,...,7}{black}
            \colorbot{1,...,7}{black}
        \end{tikzpicture}
\end{array}\]
\end{example}

The symmetric group $S_r$ sits naturally inside $P_r(x)$ where the permutation $\sigma\in S_r$ corresponds to the set partition $\{\{\sigma(1),\un1\},\dots,\{\sigma(r),\un{r}\}\}$. Writing $\LL_\sigma$ for the corresponding diagram-basis element, the product $\LL_{\sigma_1}\LL_\pi\LL_{\sigma_2}$ leads to an action of $S_r\times S_r$ on $\Pi_{2r}$ where we write $\sigma_1.\pi.\sigma_2$ for the resulting set partition so that \[\LL_{\sigma_1}\LL_\pi\LL_{\sigma_2}=\LL_{\sigma_1.\pi.\sigma_2}.\] The set partition $\sigma_1.\pi.\sigma_2$ is obtained from $\pi$ by replacing each non-underlined element $i$ with $\sigma_1(i)$ and each underlined element $\un{i}$ with $\un{{\sigma_2}^{-1}(i)}$.

The following straightforward fact will be useful later.

\begin{lemma}\label{lem:fixing_partition} The subgroup $(S_r\times S_r)^\pi$ which fixes a set partition $\pi\in\Pi_{2r}$ decomposes as a semidirect product \begin{align*}
    (S_r\times S_r)^\pi&=\left(\prod_{B\in\pi} N_B\right)\rtimes H
\end{align*} where $N_B$ consists of permutations within the block $B$ and $H$ consists of permutations of whole blocks of $\pi$ with the same number of underlined and non-underlined elements.
\end{lemma}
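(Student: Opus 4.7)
The plan is to produce a split short exact sequence
\[1 \to \prod_{B\in\pi} N_B \to (S_r\times S_r)^\pi \to H \to 1,\]
from which the semidirect product decomposition follows by the standard correspondence between split extensions and semidirect products. A pair $(\sigma_1,\sigma_2)$ acts on labels by $i\mapsto\sigma_1(i)$ for $i\in[r]$ and $\un i\mapsto\un{\sigma_2^{-1}(i)}$ for $\un i\in[\un r]$. If it fixes $\pi$, this label bijection permutes the blocks of $\pi$ among themselves; and because $\sigma_1$ preserves $[r]$ while $\sigma_2$ preserves $[\un r]$, each block must be sent to a block with the same number of non-underlined and underlined elements. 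This gives a homomorphism $\Phi\colon(S_r\times S_r)^\pi\to H$.

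Next, I would identify $\ker\Phi$ with $\prod_{B\in\pi} N_B$. An element lies in $\ker\Phi$ exactly when it fixes every block setwise, which is equivalent to $\sigma_1$ independently permuting the non-underlined entries of each block and $\sigma_2^{-1}$ independently permuting the underlined entries of each block; every such choice produces a valid stabilizer element. Hence $\ker\Phi=\prod_{B\in\pi}N_B$, and this subgroup is normal simply because it is the kernel of a homomorphism.

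Finally, I would split $\Phi$ by sending $h\in H$ to the pair $(\sigma_1,\sigma_2)$ determined by requiring, for each $B$ with $h(B)=B'$, that $\sigma_1$ restricts to the unique order-preserving bijection $B\cap[r]\to B'\cap[r]$ and that $\sigma_2^{-1}$ restricts to the unique order-preserving bijection $B\cap[\un r]\to B'\cap[\un r]$. The key observation is that the composition of order-preserving bijections between finite totally ordered sets is again order-preserving, so this prescription is multiplicative in $h$ and yields a genuine group homomorphism $s\colon H\to (S_r\times S_r)^\pi$ with $\Phi\circ s=\mathrm{id}_H$.

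The main care-point is verifying this functoriality of the section while keeping track of the inverse on $\sigma_2$ dictated by the action convention; both are straightforward once the bookkeeping is set up, and the remainder of the argument is formal.
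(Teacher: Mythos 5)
Your proof is correct. The paper states this lemma as a ``straightforward fact'' and gives no proof, so there is nothing to diverge from; your split short exact sequence, with the kernel identified as $\prod_{B\in\pi}N_B$ and the section built from order-preserving bijections between blocks of the same type, is the natural formalization, and indeed the image of your section is exactly how the paper realizes $H$ concretely in its example (e.g.\ $(1\,3)(2\,4)(\un1\,\un2)$ is the order-preserving matching of $\{1,2,\un1\}$ with $\{3,4,\un2\}$).
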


\begin{example} For $\pi=\{\{1,2,\un1\},\{3,4,\un2\},\{5\},\{\un3,\un4,\un5\}\}$, the subgroup $(S_5\times S_5)^\pi$ of permutations that fix $\pi$ decomposes as above where \begin{align*}
    N_{\{1,2,\un1\}}&=S_{\{1,2\}}\times S_{\{\un1\}}\\
    N_{\{3,4,\un2\}}&=S_{\{3,4\}}\times S_{\{\un2\}}\\
    N_{\{5\}}&=S_{\{5\}}\\
    N_{\{\un3,\un4,\un5\}}&=S_{\{\un3,\un4,\un5\}}\\
    H&=\{\id, (1\,3)(2\,4)(\un1\,\un2)\}.
\end{align*}
\end{example}

\subsubsection{Representations of \texorpdfstring{$P_r(n)$}{Pr(n)}}

In \cite{Halverson2018SetpartitionTA}, the authors construct the simple modules $P_r^\lambda$ of $P_r(n)$ for $n\geq 2r$. For $\lambda\in\Lambda^{P_r(n)}$, the module $P_r^\lambda$ consists of formal linear combinations of standard multiset partition tableaux \[P_r^\lambda\defeq\C\{v_T:T\in\SMT(\lambda,r)\}\] with action given as follows. For a set partition $\pi\in\Pi_{2r}$ to act on a tableau $T$, first pull out the content of $T$, a set partition of $[r]$, into a single row. Then, put $\pi$ on top of this row and identify the corresponding vertices. Form $T'$ by replacing the content of each box in $T$ with the set of vertices at the top of $\pi$ that the box is connected to, and for each block entirely in the top of $\pi$, include it as the content of a box in the first row of $T'$. If two blocks above the first row are combined or the content of a box above the first row does not connect to the top of the partition diagram, the result is zero.

\begin{example} Here we show the action of two different diagrams on the same tableau.
    \begin{center}
    \begin{tabular}{cc}
         $\inline{\begin{tikzpicture}[line width=1.25pt, xscale=.7, yscale=.7]
                \path(1.2,1) node {$\youngx(1.5,\;\;<5>,<12><4>,<3>)$};
                \path (-.1,1.1) coordinate (T21);
                \path (.5,1.8) coordinate (T31);
                \path (2.45,.5) coordinate (T13);
                \path (1.45,1.1) coordinate (T22);
                \path (-.75,2.7) coordinate (C1);
                \path (.25,2.7) coordinate (C2);
                \path (1.25,2.7) coordinate (C3);
                \path (2.25,2.7) coordinate (C4);
                \path (3.25,2.7) coordinate (C5);
                \path (-.75,3.2) coordinate (B1);
                \path (.25,3.2) coordinate (B2);
                \path (1.25,3.2) coordinate (B3);
                \path (2.25,3.2) coordinate (B4);
                \path (3.25,3.2) coordinate (B5);
                \path (-.75,4.2) coordinate (T1);
                \path (.25,4.2) coordinate (T2);
                \path (1.25,4.2) coordinate (T3);
                \path (2.25,4.2) coordinate (T4);
                \path (3.25,4.2) coordinate (T5);
                \draw[gray] (T21) .. controls +(-.6,0) and +(+.6,0) .. (C1);
                \draw[gray] (T31) .. controls +(0,+.6) and +(0,-.6) .. (C3);
                \draw[gray] (T22) .. controls +(0,+.6) and +(0,-.6) .. (C4);
                \draw[gray] (T13) .. controls +(0,+.8) and +(0,-.8) .. (C5);
                \draw[black] (C1) -- (C2);
                \draw[black] (T1)--(T2)--(B3);
                \draw[black] (B1)--(B2)--(T3);
                \draw[black] (B4)--(B5)--(T5)--(B4);
                \filldraw[fill=gray,draw=gray,line width = 1pt] (T21) circle (2pt);
                \filldraw[fill=gray,draw=gray,line width = 1pt] (T31) circle (2pt);
                \filldraw[fill=gray,draw=gray,line width = 1pt] (T13) circle (2pt);
                \filldraw[fill=gray,draw=gray,line width = 1pt] (T22) circle (2pt);
                \foreach \i in {C1, C2, C4, B1, B2, B4, T1, T2, T3, B5, C5, C3, B3, T4, T5} {\filldraw[fill=black,draw=black,line width = 1pt] (\i) circle (4pt);}
        \end{tikzpicture}}=\inline{\youngx(1.5,\;\;<4>,<3><5>,<12>)}$ &
         $\inline{\begin{tikzpicture}[line width=1.25pt, xscale=.7, yscale=.7]
                \path(1.2,1) node {$\youngx(1.5,\;\;<5>,<12><4>,<3>)$};
                \path (-.1,1.1) coordinate (T21);
                \path (.5,1.8) coordinate (T31);
                \path (2.45,.5) coordinate (T13);
                \path (1.45,1.1) coordinate (T22);
                \path (-.75,2.7) coordinate (C1);
                \path (.25,2.7) coordinate (C2);
                \path (1.25,2.7) coordinate (C3);
                \path (2.25,2.7) coordinate (C4);
                \path (3.25,2.7) coordinate (C5);
                \path (-.75,3.2) coordinate (B1);
                \path (.25,3.2) coordinate (B2);
                \path (1.25,3.2) coordinate (B3);
                \path (2.25,3.2) coordinate (B4);
                \path (3.25,3.2) coordinate (B5);
                \path (-.75,4.2) coordinate (T1);
                \path (.25,4.2) coordinate (T2);
                \path (1.25,4.2) coordinate (T3);
                \path (2.25,4.2) coordinate (T4);
                \path (3.25,4.2) coordinate (T5);
                \draw[gray] (T21) .. controls +(-.6,0) and +(+.6,0) .. (C1);
                \draw[gray] (T31) .. controls +(0,+.6) and +(0,-.6) .. (C3);
                \draw[gray] (T22) .. controls +(0,+.6) and +(0,-.6) .. (C4);
                \draw[gray] (T13) .. controls +(0,+.8) and +(0,-.8) .. (C5);
                \draw[black] (C1) -- (C2);
                \draw[black] (T1)--(B1);
                \draw[black] (B2)--(B3)--(T2)--(B2);
                \draw[black] (T3)--(B4)--(B5);
                \draw[black] (T4)--(T5);
                \filldraw[fill=gray,draw=gray,line width = 1pt] (T21) circle (2pt);
                \filldraw[fill=gray,draw=gray,line width = 1pt] (T31) circle (2pt);
                \filldraw[fill=gray,draw=gray,line width = 1pt] (T13) circle (2pt);
                \filldraw[fill=gray,draw=gray,line width = 1pt] (T22) circle (2pt);
                \foreach \i in {C1, C2, C4, B1, B2, B4, T1, T2, T3, B5, C5, C3, B3, T4, T5} {\filldraw[fill=black,draw=black,line width = 1pt] (\i) circle (4pt);}
        \end{tikzpicture}}=0$
    \end{tabular}
\end{center}
\end{example}

The result $T$ of the above process may not have increasing rows and columns, so we need to make sense of what it means to write $v_T$ for $T$ nonstandard. The algorithm for writing $v_T$ as a linear combination of standard tableaux is called the \defn{straightening algorithm}. The straightening algorithm for $P_r^\lambda$ is the same as for the Specht modules of $S_n$ applied to the rows above the first row of $T$, a fact which is clear from the construction in \cite[Section 3.4]{Halverson2018SetpartitionTA}. A complete treatment of this algorithm for Specht modules can be found in \cite[Section 2.6]{sagan}, but we summarize the process here.

Let $\lambda\in\Lambda^{P_r(n)}$. Because $S_r$ sits inside $P_r(n)$, there is a natural $S_r$-action on $P_r^\lambda$ where $\sigma$ acts by replacing each value $i$ in $T\in\SMT(\lambda,r)$ with $\sigma(i)$. Setting $m=\lambda_2+\dots+\lambda_\ell$, there is also an action of $S_m$ by permuting the boxes above the first row of $T$.

The simplest step of the straightening algorithm involves permuting boxes within the same column. Let $T$ be a multiset partition tableau with content a set partition of $[n]$. If $T'$ is obtained from $T$ by swapping two boxes in the same column, then \[v_{T'}=-v_T.\] Applying this rule repeatedly allows us to assume that the tableau we want to straighten has increasing columns.

Now suppose $T$ is a multiset partition tableau of shape $\lambda$ with increasing columns. We say that $T$ has a \defn{decrease} at boxes $a$ and $b$ if the box $b$ is immediately to the right of $a$ in the same row and the content of $a$ is larger than the content of $b$. Suppose $T$ has a decrease at boxes $a$ and $b$ and let $A$ be the set of boxes including $a$ and each box above it and let $B$ be the set of boxes including $b$ and each box below it. The \defn{Garnir transversal} $g(A,B)$ consists of all permutations of the boxes in $A\cup B$ so that their contents still increase up columns of $T$.

The following theorem is an immediate consequence of applying \cite[Theorem 2.6.4]{sagan} to the construction of $P_r^\lambda$ in \cite{Halverson2018SetpartitionTA}.

\begin{theorem} \label{thm:straightening} Suppose $T$ is a multiset partition tableau with content a set partition of $[r]$ which has a decrease along a row. Let $g(A,B)$ be the Garnir transversal for the corresponding subsets $A$ and $B$ of boxes of $T$ as above. Then \begin{align*}
    \sum_{\sigma\in g(A,B)}\sgn(\sigma) v_{\sigma.T}=0
\end{align*} where $T\triangleleft \sigma.T$ for $\sigma\neq\id$.
\end{theorem}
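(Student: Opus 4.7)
The plan is to derive the theorem from the construction of $P_r^\lambda$ in \cite{Halverson2018SetpartitionTA} together with Sagan's Garnir relation for Specht modules, \cite[Theorem 2.6.4]{sagan}. The construction of \cite{Halverson2018SetpartitionTA} makes $\{v_T : T \in \SMT(\lambda,r)\}$ into a module on which the symmetric group permuting the boxes above the first row acts exactly as on the Specht module of shape $(\lambda_2,\dots,\lambda_\ell)$, and on which the column-swap sign change recorded just above is in force. Given the decrease at $(a,b)$, the associated sets $A, B$ (chains up column $c$ and down column $c+1$) and their Garnir transversal $g(A,B)$ are exactly the classical Specht-module data, so the vanishing $\sum_{\sigma \in g(A,B)} \sgn(\sigma) v_{\sigma . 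T} = 0$ is immediate from \cite[Theorem 2.6.4]{sagan}.

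The principal task is then the column-dominance statement $T \triangleleft \sigma . T$ for $\sigma \neq \id$. Here I would exploit that the decrease at $(a,b)$ together with column increase forces every content of $B$ to lie strictly below every content of $A$ in the last-letter order. Consequently, the identity places all of $A$ in column $c$ and all of $B$ in column $c+1$, while any $\sigma \neq \id$ transfers some $k \geq 1$ of the small contents from $B$ into column $c$ and the same number of large contents from $A$ into column $c+1$, with columns $j \notin \{c, c+1\}$ unaffected. Fix a threshold content $\tilde B_i$; the total number of contents $\leq \tilde B_i$ in $A \cup B$ is invariant under $\sigma$, so the partial sum of $\alpha^{\tilde B_i}(\cdot)$ up to column $c+1$ agrees for $T$ and $\sigma . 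T$, and I would show the partial sum up to column $c$ is weakly larger for $\sigma . T$. This splits into cases on where $\tilde B_i$ sits: if $\tilde B_i$ is strictly below every content of $A$, the identity contributes $0$ in column $c$ and nothing can do worse; if $\tilde B_i$ is at least every content of $B$, the identity already fits all of $B$ into column $c+1$ so moving $B$-contents into column $c$ can only increase the column-$c$ tally. Hence $\alpha^{\tilde B_i}(T) \trianglelefteq \alpha^{\tilde B_i}(\sigma . T)$ for every $i$.

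For the strict inequality, I would pick $\tilde B_i$ to be the largest content in $B$ that $\sigma$ transfers into column $c$: then $\alpha^{\tilde B_i}(T)_c = 0$ but $\alpha^{\tilde B_i}(\sigma . T)_c \geq 1$, yielding a strict comparison in the $c$th partial sum and thus $T \triangleleft \sigma . T$. The main obstacle I foresee is the faithful transfer of Sagan's theorem through the Halverson construction, particularly verifying that degenerate cases (including any decrease with $|A|=|B|=1$ involving the isolated first-row boxes) collapse to the sign-change rule for column swaps stated just before the theorem; but this is bookkeeping rather than a conceptual hurdle, and the conceptual content of the argument is the dominance verification above.
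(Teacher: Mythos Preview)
Your approach is exactly the paper's: the theorem is asserted there as an immediate consequence of \cite[Theorem 2.6.4]{sagan} applied through the construction of $P_r^\lambda$ in \cite{Halverson2018SetpartitionTA}, with no further proof given, and you invoke precisely those two ingredients for the vanishing relation. You in fact go beyond the paper by supplying an argument for the dominance claim $T\triangleleft\sigma.T$, which the paper simply states.

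One small repair is needed in your strict-inequality step: $\alpha^{\tilde B_i}(T)_c$ need not equal $0$, because column $c$ may contain boxes below $a$ (outside $A$) whose contents are $\leq\tilde B_i$---this already happens in the paper's worked example, where $\{2\}$ sits in column $1$ below the $A$-boxes. What your argument actually establishes is that the contribution of the $A$-positions to $\alpha^{\tilde B_i}(\cdot)_c$ rises from $0$ to at least $1$ while every other box in column $c$ is fixed by $\sigma$, so $\alpha^{\tilde B_i}(T)_c<\alpha^{\tilde B_i}(\sigma.T)_c$ and strict dominance follows. With that tweak the argument is sound; your first-row worry is moot, since the paper explicitly restricts the straightening, and hence the Garnir data, to the rows above the first.
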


Using \Cref{thm:straightening}, we can write any nonstandard $T$ as a linear combination of tableaux strictly larger in the column dominance order. Because the tableaux continue to increase in this order and there are only finitely many tableaux of a given content, this process eventually terminates, writing $v_T$ as a linear combination of $v_S$ for $S\in\SMT(\lambda,r)$.

\begin{example} We apply one iteration of the straightening algorithm to the following tableau:


\newcommand\acell{\Yfillcolour{c1!20}}
\newcommand\bcell{\Yfillcolour{c2!20}}
\newcommand\whcell{\Yfillcolour{white}}

\newcommand{\cramp}[1]{\hspace{-.15in}#1\hspace{-.08in}}

\begin{align*}
    T&=\inline{\young(\;\;<17>,2!\bcell4,!\acell8!\bcell<35>,!\acell<69>)}\\
    A&=\{\{8\},\{6,9\}\}\\
    B&=\{\{4\},\{3,5\}\}
\end{align*}

To conveniently write permutations of the boxes of $T$, we label the boxes above the first row by the following standard Young tableau: \[\inline{\young(12,34,5)}.\] Now the permutations of these boxes that leave the columns of $T$ increasing are:
\begin{align*}
    g(A,B)&=\{\id,(3\,4),(2\,4\,3),(4\,5\,3),(2\,4\,5\,3),(2\,3)(4\,5)\}.
\end{align*}

By \Cref{thm:straightening}, we get the following equation (where we draw the tableau $S$ in place of $v_S$):

\begin{align*}
    \inline{\young(\;\;<17>,2!\bcell4,!\acell8!\bcell<35>,!\acell<69>)}\cramp-
    \inline{\young(\;\;<17>,2!\bcell4,!\bcell<35>!\acell8,!\acell<69>)}\cramp+
    \inline{\young(\;\;<17>,2!\bcell<35>,!\bcell4!\acell8,!\acell<69>)}\cramp+
    \inline{\young(\;\;<17>,2!\bcell4,!\bcell<35>!\acell<69>,!\acell8)}\cramp-
    \inline{\young(\;\;<17>,2!\bcell<35>,!\bcell4!\acell<69>,!\acell8)}\cramp+
    \inline{\young(\;\;<17>,2!\acell8,!\bcell4!\acell<69>,!\bcell<35>)}\hspace{-.15in}=0
\end{align*}

    Now we can solve for $v_T$ on the left as a linear combination of tableaux with the decrease eliminated.
    
\end{example}

\begin{remark}\label{rmk:straightening} Note that in the above example, we are considering an action of $S_5$ on the boxes above the first row rather than the usual action of $S_9$ on the content of the tableau. In  the case that the size of the contents of the boxes is preserved, we can embed this action of $S_5$ into $S_9$. For example, the permutation $(2\,3)(4\,5)\in S_5$ in the above example corresponds to $(4\,8)(3\,6)(5\,9)\in S_9$ so that \begin{align*}
    v_{(2\,3)(4\,5).T}=(4\,8)(3\,6)(5\,9)v_T.
\end{align*} This observation will be very helpful in developing an analogous straightening algorithm for semistandard multiset partition tableaux.
\end{remark}

\subsection{Super RSK}

A \defn{superalphabet} is a totally ordered set $\mathscr{A}$ with a map $\mo:\mathscr{A}\to \Z/2\Z=\{0,1\}$. Recall that in $\Z/2\Z$, we have that $1+1=0$. If $\mo(a)=0$ we say that $a$ is \defn{even}, and if $\mo(a)=1$ we say that $a$ is \defn{odd}. For superalphabets $\mathscr{A}$ and $\mathscr{B}$, we call an element of $\mathscr{A}\times \mathscr{B}$ a \defn{biletter} $(a,b)$. We define a total order\footnote{In \cite{Muth2019Super}, the author denotes this order with $a_i$ and $b_i$ swapped as $\lhd$ and the order on the second coordinate given below as $\prec$.} on biletters where $(a_1,b_1)\prec (a_2,b_2)$ if either

\begin{enumerate}
    \item $a_1<a_2$
    \item $a_1=a_2$ and one of the following hold: \begin{enumerate}
        \item $\mo(b_1)=1$ and $\mo(b_2)=0$.
        \item $\mo(b_1)=\mo(b_2)=0$ and $b_1<b_2$.
        \item $\mo(b_1)=\mo(b_2)=1$ and $b_1>b_2$.
    \end{enumerate}
\end{enumerate}

A biletter is \defn{mixed} if $\mo(a)+\mo(b)=1$. A \defn{biword} of length $m$ is a sequence $w=((a_1,b_1),\dots,(a_m,b_m))\in(\mathscr{A}\times\mathscr{B})^{m}$. A biword is called \defn{ordered} if $(a_i,b_i)\preceq (a_j,b_j)$ whenever $i<j$ and called \defn{restricted} if no mixed biletter is repeated. For $\tilde A$ a multiset of elements in $\mathscr{A}$ and $\tilde B$ a multiset of elements in $\mathscr{B}$ with $\abs{\tilde A}=\abs{\tilde B}=m$, write $\ORBW(\tilde A,\tilde B)$ for the set of ordered restricted biwords $((a_1,b_1),\dots,(a_m,b_m))$ where $\multi{a_1,\dots,a_m}=\tilde A$ and $\multi{b_1,\dots,b_m}=\tilde B$.

Let $\tilde A$ be a multiset from a superalphabet $\mathscr{A}$. A \defn{semistandard supertableau} of shape $\lambda$ with content $\tilde A$ is a filling $T$ of the Young diagram of $\lambda$ with elements of $A$ so that
\begin{enumerate}
    \item The rows and columns of $T$ are non-decreasing.
    \item $T$ is row-strict in the odd letters: if $a$ appears twice in the same row, then $\mo(a)=0$.
    \item $T$ is column-strict in the even letters: if $a$ appears twice in the same column, then $\mo(a)=1$.
\end{enumerate} Write $\SSsT(\lambda,\tilde A)$ for the set of semistandard supertableaux of shape $\lambda$ with content $\tilde A$.

Following \cite{Muth2019Super}, we now define two operations on the set of semistandard supertableaux called $0$-insertion and $1$-insertion.

\begin{definition}\label{def:zero_insertion} The process of performing $0$-insertion of a value $a$ into a tableau depends on whether $a$ is odd or even. We describe the process compactly here by writing the instructions for an even value first in \textcolor{c2}{orange} and then the instructions for an odd value in \textcolor{c1}{blue}.

Suppose $a$ is (\textcolor{c2}{even}/\textcolor{c1}{odd}). To perform $0$-insertion of $a$ into a (\textcolor{c2}{row}/\textcolor{c1}{column}) of a tableau $T$,
\begin{itemize}
    \item[(i)] If $a$ is larger than or equal to each element in the (\textcolor{c2}{row}/\textcolor{c1}{column}), place it at the end of the (\textcolor{c2}{row}/\textcolor{c1}{column}) and terminate.
    \item[(ii)] Otherwise, consider the (\textcolor{c2}{left}/\textcolor{c1}{bottom})-most box in the (\textcolor{c2}{row}/\textcolor{c1}{column}) with content larger than $a$ and call it the bump site. Write $b$ for the content of the bump site, and replace the content of the bump site with $a$.
    \item[(iii)] If $b$ is even, insert it into the row above the bump site. If $b$ is odd, insert it into the column to the right of the bump site.
\end{itemize}

Write $T\stackrel{0}{\leftarrow}a$ for the result of inserting $a$ into the first (\textcolor{c2}{row}/\textcolor{c1}{column}) of $T$.
\end{definition}

\begin{example} We give an example of $0$-insertion where the alphabet is the positive integers with the usual map into $\Z/2\Z$. We color the odd numbers blue and even numbers orange to highlight which rules are being applied from \Cref{def:zero_insertion}\begin{center}
    \newcommand{\vertinsert}[1]{\raisebox{.25em}{$\hspace{.4em}\downarrow\!\!\raisebox{.35em}{${}_{#1}$}$}}

\newcommand{\horinsert}[1]{\raisebox{.2em}{$\stackrel{#1}{\leftarrow}$}}

\newcommand\grcell{\Yfillcolour{gray!50}}
\newcommand\whcell{\Yfillcolour{white}}

\newcommand{\figdesc}[1]{\begin{minipage}{.8in}\raggedright\vspace{.05in}
#1
\end{minipage}}

\begin{tabular}{ccccccc}
     $\raisebox{-.1in}{$\gyoung(;2;3,;3,:<\vertinsert{0}>,:<\textcolor{c1}{1}>)$}$ & $=$ & $\inline{\gyoung(!\grcell;1!\whcell;3,;3::<\horinsert{0}>:<\textcolor{c2}{\!\!\!\!\raisebox{.015in}{2}}>)}$ & = & $\raisebox{-.1in}{$\gyoung(;1;3,!\grcell;2!\whcell,::<\vertinsert{0}>,::<\textcolor{c1}{3}>)$}$ & = & $\inline{\young(13,23)}$\\
      \figdesc{Insert in \textcolor{c1}{first column}} & & \figdesc{Insert in \textcolor{c2}{row above} bump site} & & \figdesc{Insert in \textcolor{c1}{column right of} bump site} & &
\end{tabular}
\end{center}
\end{example}

Similarly, define $1$-insertion as in \Cref{def:zero_insertion}, swapping ``row'' and ``column,'' modifying step (i) to check if $a$ is \textit{strictly} larger, and modifying step (ii) to look for a box with content larger than or \textit{equal} to $a$.

\begin{example} We repeat the previous example, inserting $1$ via $1$-insertion instead.
    \newcommand{\vertinsert}[1]{\raisebox{.25em}{$\hspace{.4em}\downarrow\!\!\raisebox{.35em}{${}_{#1}$}$}}

\newcommand{\horinsert}[1]{\raisebox{.2em}{$\stackrel{#1}{\leftarrow}$}}

\newcommand\grcell{\Yfillcolour{gray!50}}
\newcommand\whcell{\Yfillcolour{white}}

\newcommand{\figdesc}[1]{\begin{minipage}{.7in}\raggedright\vspace{.05in}
#1
\end{minipage}}

\begin{center}
    \begin{tabular}{ccccccccc}
        $\inline{\gyoung(;2;3:<\horinsert{1}>:<\textcolor{c1}{\!\!\!\!\raisebox{.015in}{1}}>,;3)}$ & \hspace{-.3in}$=$ \hspace{-.3in} & $\raisebox{-.1in}{$\gyoung(!\grcell;1!\whcell;3,;3,::<\vertinsert{1}>,::<\textcolor{c2}{2}>)$}$ & \hspace{-.2in}$=$\hspace{-.2in} & $\inline{\gyoung(;1!\grcell;2!\whcell,;3::<\horinsert{1}>:<\textcolor{c1}{\!\!\!\!\raisebox{.015in}{3}}>)}$ & \hspace{-.25in}$=$\hspace{-.25in} & $\inline{\gyoung(;1;2,!\grcell;3!\whcell,:::<\horinsert{1}>:<\textcolor{c1}{\!\!\!\!\raisebox{.015in}{3}}>)}$ & \hspace{-.3in}$=$\hspace{-.1in} & $\inline{\young(12,3,3)}$\\
    
        \figdesc{Insert in \textcolor{c1}{first row}} & & \figdesc{Insert in \textcolor{c2}{column right of} bump site} & & \figdesc{Insert in \textcolor{c1}{row above} bump site} & & \figdesc{Insert in \textcolor{c1}{row above} bump site}
    \end{tabular}
\end{center}
\end{example}

These dual notions of $0$- and $1$-insertion give rise to the following algorithm called \defn{super RSK}.

\begin{definition}
    For $w=((a_1,b_1),\dots,(a_m,b_m))\in (\mathscr{A}\times\mathscr{B})^m$ an ordered, restricted biword, define $P_w^0=\emptyset$ and $Q_w^0=\emptyset$. Then inductively define \[P_w^i=P_w^{i-1}\stackrel{\mo(a_i)}{\leftarrow} b_i\] and $Q_w^i$ to be $Q_w^{i-1}$ along with $a_i$ in the box created in the insertion to $P_w^{i-1}$. Write $P_w=P_w^m$ and $Q_w=Q_w^m$ and\begin{align*}
        \sRSK(w)=(P_w,Q_w).
    \end{align*}
\end{definition}

\begin{theorem}\cite[Theorem 5.2]{Muth2019Super} For any multiset $\tilde A$ from $\mathscr{A}$ and multiset $\tilde B$ from $\mathscr{B}$ with $\abs{\tilde A}=\abs{\tilde B}=m$, the map $\sRSK$ defines a bijection \begin{align*}
    \mathcal{ORBW}(\tilde A,\tilde B,m)\stackrel{\sim}{\longleftrightarrow}\biguplus_{\lambda\vdash m} \SSsT(\lambda,\tilde A)\times\SSsT(\lambda,\tilde B)
\end{align*}
\end{theorem}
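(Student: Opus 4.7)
My plan is to prove this bijection by constructing an explicit inverse and showing the two maps compose to the identity in both directions. The first step is to verify that the insertion operations are well-defined: inserting a letter $b$ into a semistandard supertableau $P$ via $\mo(b)$-insertion produces another semistandard supertableau whose shape differs from that of $P$ by exactly one box. This requires a case analysis tracking what happens at each bump site, using the observation that even bumped letters travel up via row insertions while odd bumped letters travel right via column insertions. The strictness conventions---even letters are column-strict, odd letters are row-strict---are precisely what forces the bumping process to terminate at an outer corner, adding exactly one box.

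Next I would verify that $Q_w$ is itself a semistandard supertableau. Its content is $\tilde A$ by construction, so what remains is to check the row/column strictness conditions on $Q_w$. Here the ordering $\preceq$ on biletters is essential: when two consecutive biletters $(a_i, b_i) \preceq (a_{i+1}, b_{i+1})$ of the ordered biword have $a_i = a_{i+1}$, the subcases in the definition of $\preceq$ impose exactly the compatibility on $b_i, b_{i+1}$ needed so that the bumping path of the $(i{+}1)$-st insertion terminates in a cell strictly right of the $i$-th new cell if $a_i$ is even, and strictly below it if $a_i$ is odd. This ``super bump path lemma'' is the crucial analogue of the classical row-insertion bump path comparison and can be established by induction on the length of the path, with separate subcases according to the parities of the two entries under comparison at each row/column.

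To construct the inverse, given a pair $(P, Q) \in \SSsT(\lambda, \tilde B) \times \SSsT(\lambda, \tilde A)$, I would recover $w$ by reverse-bumping one biletter at a time. At each step we identify the cell of $Q$ most recently added: it contains the current largest recording letter $a$, and among cells with that label we pick the rightmost if $\mo(a) = 0$ and the bottommost if $\mo(a) = 1$ (this tiebreaking is forced by the bump path monotonicity above). We then remove that cell from $P$ and reverse the bumping cascade, producing a letter $b$ expelled from the first row or column, and output the biletter $(a, b)$.

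The main obstacle is verifying that each reverse-bump step is uniquely determined and that the output biword is both ordered and restricted. Uniqueness at a reverse step requires checking that, given the target cell and the parity of the letter it currently holds, there is exactly one candidate for the previously bumped entry in the adjacent row or column; this follows from the strictness conventions of $\mo(b)$-insertion but needs a separate argument for each of the four combinations of $\mo(a)$ and the parity of the displaced entry. Once these local compatibility checks are in place, an induction on $m$ gives $\sRSK^{-1} \circ \sRSK = \mathrm{id}$ and $\sRSK \circ \sRSK^{-1} = \mathrm{id}$, and the ordered/restricted properties of the output follow from the bump path monotonicity established in the forward direction. Since the result is already proved in \cite{Muth2019Super}, an alternative would be simply to cite it and only verify here the specific features of super RSK that are used in later sections.
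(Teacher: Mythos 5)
The paper does not prove this statement at all: it is imported verbatim from \cite[Theorem 5.2]{Muth2019Super} and used as a black box, so the ``paper's own proof'' is simply the citation. Your closing sentence --- that one could just cite Muth and only verify the features actually used later --- is exactly what the paper does, and for the purposes of this paper that is the right call.

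As a standalone proof, what you have written is an architecturally correct plan rather than a complete argument, and it does mirror the structure of the actual proof in \cite{Muth2019Super}: well-definedness of $0$- and $1$-insertion, semistandardness of the recording tableau via a bump-path comparison, and invertibility via reverse bumping with a parity-dependent tiebreak. The two places where the real content lives are precisely the two you flag but do not carry out: the ``super bump path lemma'' comparing the insertion paths of consecutive biletters (which must be split according to the parities of $a_i$ and of the letters encountered along the paths, and which is what makes the order $\preceq$ --- in particular the reversed order on odd second coordinates and the restrictedness hypothesis ruling out repeated mixed biletters --- exactly the right hypothesis), and the uniqueness of the reverse-bump step in each of the four parity combinations. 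Without those two lemmas the argument is a scaffold, not a proof; with them it would be a correct reproof of Muth's theorem. Since the paper only needs the statement, filling them in would be redundant here, but you should be aware that naming a lemma and proving it are different things, and the parity case analysis in the bump-path comparison is where a careless adaptation of the classical argument can genuinely go wrong.
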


\section{Super Multiset Partition Insertion}\label{sec:emp_insertion}

In this section, we present a modified version of the insertion algorithm on multiset partitions introduced in \cite{Colmenarejo2020Insertion} built on the super RSK algorithm. Let $\mathscr{M}$ be the set of multisets with elements in $\{1,2,\dots\}\cup\{\ov1,\ov2,\dots\}$ ordered by the last-letter order. This is a superalphabet with grading given by the parity of the number of barred elements:

\begin{align*}
    \mo(\tilde{S})&=\begin{cases}
        0 & \text{$\tS$ has an even number of barred elements}\\
        1 & \text{$\tS$ has an odd number of barred elements}
    \end{cases}.
\end{align*}

\begin{definition}
    Let $\hpi\in\hat\Pi_{2(\bfa,\bfb)}$. Let $\multi{\tS_1,\dots,\tS_m}$ be the multiset of propagating blocks of of $\hpi$. Write $\tS_i^-$ for the multiset of underlined values in $\tS_i$ with the underlines removed and write $\tS_i^+$ for the remaining elements. Define the \defn{biword associated to $\hpi$} to be the ordered biword \[w(\hpi)=\left(\begin{tabular}{ccc}
         $\tS_1^+$ & $\cdots$ & $\tS_m^+$ \\
         $\tS_1^-$ & $\cdots$ & $\tS_m^-$
    \end{tabular}\right)\in (\mathscr{M}\times\mathscr{M})^m.\] By definition of the set $\hat\Pi_{2(\bfa,\bfb)}$, the biword $w(\tpi)$ is restricted.
\end{definition}

\begin{definition} Let $\hat\pi\in\hat\Pi_{2(\bfa,\bfb)}$ and $n\geq 2\abs{\bfa}+2\abs{\bfb}$. Then $\sRSK(w(\hat\pi))=(P,Q)$ is a pair of semistandard supertableaux of the same shape $\mu$ with content equal to the restrictions of the propagating blocks of $\hat\pi$ to the underlined and non-underlined elements respectively. Note that $n$ is at least twice the number of propagating blocks of $\hat\pi$, so in particular $n-\abs{\mu}\geq\abs{\mu}$. Hence, we can form a tableau $P'$ of shape $\lambda=(n-\abs{\mu},\mu)$ from $P$ by adding a first row consisting of a number of empty boxes followed by the blocks of $\hat\pi$ with only non-underlined elements. Do the same for $Q'$ adding the blocks with only underlined elements. Let \[\smRSK(\hpi)=(P',Q').\]
\end{definition}

\begin{prop}
    For $n\geq2\abs\bfa+2\abs\bfb$, the map $\smRSK$ is a bijection:
    \begin{align}
        \hat\Pi_{2(\bfa,\bfb)}\stackrel{\sim}{\longleftrightarrow}\biguplus_{\substack{\lambda\vdash n\\\SSMT(\lambda,\bfa,\bfb)\neq\emptyset}} \SSMT(\lambda,\bfa,\bfb)\times\SSMT(\lambda,\bfa,\bfb)\label{eq:smRSK_bijection}
    \end{align}
\end{prop}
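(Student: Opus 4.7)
The plan is to prove this proposition by (i) verifying that $\smRSK$ lands in the claimed target set and (ii) constructing an explicit inverse, using Muth's super RSK bijection as a black box for the propagating part. First I would check that the shape $\lambda=(n-\abs{\mu},\mu)$ is a partition and that the first row has enough empty boxes. Let $m=\abs{\mu}$ denote the number of propagating blocks of $\hat\pi$ and let $k_+$ be the number of non-propagating blocks of $\hat\pi$ consisting only of non-underlined entries. Each propagating and each non-propagating non-underlined-only block contributes at least one non-underlined entry to $\hat\pi$, so $m+k_+\leq\abs{\bfa}+\abs{\bfb}$. Combined with $n\geq 2\abs{\bfa}+2\abs{\bfb}$ and $\mu_1\leq m$, this gives $n-m-\mu_1\geq n-2m\geq 2k_+$, so the first row of $P'$ has room for the $k_+$ non-propagating blocks after the required $\lambda_2=\mu_1$ empty boxes. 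The estimate for $Q'$ is identical with $k_+$ replaced by the number of underlined-only non-propagating blocks.

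Next I would verify the semistandard conditions for $P'$ (the argument for $Q'$ is symmetric). By Muth's theorem, $P$ is a semistandard supertableau over the superalphabet $\mathscr{M}$, whose grading $\mo$ records the parity of barred entries. Hence $P$'s rows and columns weakly increase in the last-letter order, $P$ is row-strict in blocks of odd parity, and $P$ is column-strict in blocks of even parity, matching the SSMT conditions on the subtableau of shape $\mu$. The non-empty first-row boxes sit in columns strictly greater than $\lambda_2$, so they share no column with any box below, and the SSMT column-strict condition across the first-row boundary is vacuous. The only remaining constraint is that identical non-propagating blocks with an odd number of barred entries cannot appear together in the first row; this is exactly the $\hat{}$-condition on repeated non-propagating blocks.

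For the inverse, given $(P',Q')$ in the codomain, I would strip off the non-empty first-row boxes of $P'$ to recover the non-propagating non-underlined-only blocks, and similarly from $Q'$ (re-adding underlines) to recover the non-propagating underlined-only blocks. The remaining tableaux $P,Q$ of common shape $\mu$ are semistandard supertableaux, so inverse super RSK returns an ordered restricted biword $w$ whose biletters $(\tS_i^+,\tS_i^-)$ reassemble into propagating blocks $\tS_i^+\uplus\un{\tS_i^-}$. The restrictedness of $w$ -- that mixed biletters do not repeat -- translates exactly to the condition that propagating blocks of odd total barred parity are not repeated; the analogous SSMT row condition on the first rows gives the same for non-propagating blocks; so the reconstructed multiset partition lies in $\hat\Pi_{2(\bfa,\bfb)}$. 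That the two constructions are mutually inverse then reduces to the corresponding property of super RSK.

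The main obstacle will be the parity bookkeeping: checking that the $\mo$-grading on $\mathscr{M}$, the super-tableau row/column-strictness rules, and the two SSMT conditions on repeated identical blocks align precisely, and that a biletter $(\tS_i^+,\tS_i^-)$ is mixed in the sense of super RSK if and only if the associated propagating block of $\hat\pi$ has an odd total number of barred entries. Tracking this translation through the slightly asymmetric construction (the first row of $P'$ pairs non-underlined-only non-propagating blocks with the super RSK tableau $P$ built from the underlined halves $\tS_i^-$, and vice versa for $Q'$) is where the bulk of the careful verification lies.
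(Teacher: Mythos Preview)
Your proposal is correct and follows essentially the same approach as the paper's proof: verify that the image lands in the codomain by appealing to the definitions of semistandard supertableaux and the restriction on $\hat\Pi_{2(\bfa,\bfb)}$, then describe the inverse by stripping the first-row blocks and applying inverse super RSK. Your write-up is in fact considerably more careful than the paper's, which dispatches the whole argument in two sentences; in particular you supply the inequality $n-m-\mu_1\geq 2k_+$ guaranteeing room for the non-propagating blocks, and you explicitly match the mixed-biletter condition with odd total barred parity of a propagating block, both of which the paper leaves implicit.
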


\begin{proof}
    Note that $\hpi\in\hat\Pi_{2(\bfa,\bfb)}$ cannot have two identical non-propagating blocks with an odd number of barred elements. From this observation and the definition of semistandard supertableaux, it's clear that the image of $\hpi$ under $\smRSK$ is a pair of semistandard multiset partition tableaux in $\SSMT(\lambda,\bfa,\bfb)$.

    Given such a pair, we can recover the multiset partition $\hpi$ by first removing the boxes in the first row to obtain the non-propagating blocks and then reversing super RSK on the remaining tableaux to form the propagating blocks.
\end{proof}

\begin{example}We give the result of $\smRSK$ on a restricted multiset partition $\hat\pi$.
    \begin{align*}
    \hpi&=\inline{\begin{tikzpicture}[xscale=.5,yscale=.5,line width=1.25pt] 
                \foreach \i in {1,2,3,4,5,6,7,8}  { \path (\i,1.25) coordinate (T\i); \path (\i,.25) coordinate (B\i); } 
                \filldraw[fill= black!12,draw=black!12,line width=4pt]  (T1) -- (T8) -- (B8) -- (B1) -- (T1);
                \draw[black] (T1) -- (T2);
                \draw[black] (B1) -- (B2);
                \draw[black] (T3) -- (T4) -- (B4);
                \draw[black] (B3)  .. controls +(0,+.5) and +(0,+.5) .. (B5) -- (T6);
                \draw[black] (T5) -- (B7) -- (B8);
                \draw[black] (B6)--(T7);
                \colortop{1,2,3}{c1}
                \colortop{4,5}{c2}
                \colortopalt{6,7}{c3}
                \colortopalt{8}{c4}
                \colorbot{1,2,3}{c1}
                \colorbot{4,5}{c2}
                \colorbotalt{6,7}{c3}
                \colorbotalt{8}{c4}
            \end{tikzpicture}}\\
    w(\hpi)&=\left(\begin{array}{cccc}
         2 & 12 & \ov1 & \ov1\\
         \ov1\ov2 & 2 & \ov1 & 12
    \end{array}\right)\\
    sRSK(w(\hpi))&=\left(\inline{\young(<2><\ov1>,<12><\ov1\ov2>)}\raisebox{-.2in}{,}\inline{\young(<2><\ov1>,<12><\ov1>)}\right)\\
    smRSK(\hpi)&=\left(\inline{\young(\;\;<\dots><11>,<2><\ov1>,<12><\ov1\ov2>)}\raisebox{-.3in}{,}\inline{\young(\;\;<\dots><11><\ov2>,<2><\ov1>,<12><\ov1>)}\right)
\end{align*}
\end{example}

We will see in the next two sections that this bijection is an enumerative manifestation of the decomposition of an algebra $\MP_{\bfa,\bfb}(n)$ as a bimodule over itself.

\section{The mixed multiset partition algebra \texorpdfstring{$\MP_{\bfa,\bfb}(x)$}{MPab(x)}}\label{sec:epa}

Let $\bfa\in W_{r,k}$ and $\bfb\in W_{s,m}$. The subgroup $S_\bfa\times S_\bfb\subseteq S_{r+s}$ consists of the permutations of $[r+s]$ which respect the coloring map $\kappa_{\bfa,\bfb}$. Explicitly, \begin{align*}
    S_\bfa\times S_\bfb&=\left(S_{\kappa_{\bfa,\bfb}^{-1}(1)}\times\dots\times S_{\kappa_{\bfa,\bfb}^{-1}(k)}\right)\times\left(S_{\kappa_{\bfa,\bfb}^{-1}(\ov1)}\times\dots\times S_{\kappa_{\bfa,\bfb}^{-1}(\ov m)}\right).
\end{align*}

\begin{example}
    \begin{align*}
        S_{(3,2)}\times S_{(2,1,1)}=S_{\{1,2,3\}}\times S_{\{4,5\}}\times S_{\{6,7\}}\times S_{\{8\}}\times S_{\{9\}}.
    \end{align*}
\end{example}

We will usually write elements of $S_\bfa\times S_\bfb$ as $\sigma\tau$ with $\sigma\in S_\bfa$ and $\tau\in S_\bfb$.

Let $e_{\bfa,\bfb}$ be the idempotent given by \begin{align*}
    e_{\bfa,\bfb}&=\frac{1}{\abs{S_\bfa\times S_\bfb}}\sum_{\sigma\tau\in S_\bfa\times S_\bfb}\sgn(\tau)\sigma\tau\in \C S_{r+s}\subseteq P_{r+s}(x).
\end{align*} Define the \defn{mixed multiset partition algebra} by \[\MP_{\bfa,\bfb}(x)\defeq e_{\bfa,\bfb} P_{r+s}(x)e_{\bfa,\bfb}.\] We begin this section by providing a basis for $\MP_{\bfa,\bfb}(x)$ indexed by elements of $\hat\Pi_{2(\bfa,\bfb)}$.

\begin{lemma}\label{lem:sum_to_zero} Let $G$ be a finite group which acts on a finite set $S$, and let $f:G\to\{\pm1\}$ be a group homomorphism. let $\C S$ be the formal $\C$-span of the elements of $S$. For $s\in S$, \begin{align}\label{eq:sum_to_zero}
    \sum_{g\in G}f(g) g.s=0
\end{align} if and only if $s$ is fixed by an element $g\in G$ with $f(g)=-1$.
\end{lemma}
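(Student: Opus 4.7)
The plan is to prove both directions by exploiting the structure of the stabilizer $G_s=\{g\in G:g.s=s\}$ and the way $f$ restricts to it.

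For the $(\Leftarrow)$ direction, suppose $h\in G$ fixes $s$ and $f(h)=-1$. I would pair up the summands using the involution $g\mapsto gh$ on $G$ (which has no fixed points since $h\neq\id$). Under this pairing, $(gh).s=g.(h.s)=g.s$, while $f(gh)=f(g)f(h)=-f(g)$, so the contributions $f(g)\,g.s$ and $f(gh)\,(gh).s$ cancel in $\C S$. Since the pairing partitions $G$ into $|G|/2$ such pairs, the total sum is zero.

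For the $(\Rightarrow)$ direction, I would group the sum by the orbit $G.s$: each element $t\in G.s$ appears in $\C S$ with coefficient $\sum_{g:\,g.s=t}f(g)$, and $\C S$ is free on $S$, so the sum vanishes iff each of these coefficients vanishes. Fixing $g_0\in G$ with $g_0.s=t$, the set $\{g:g.s=t\}$ is the coset $g_0G_s$, so the coefficient equals $f(g_0)\sum_{h\in G_s}f(h)$. Thus the full sum in \Cref{eq:sum_to_zero} vanishes if and only if $\sum_{h\in G_s}f(h)=0$.

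The final step is to observe that $f|_{G_s}:G_s\to\{\pm 1\}$ is a group homomorphism, so either it is trivial (in which case the sum equals $|G_s|>0$) or its kernel has index $2$ in $G_s$ (in which case the sum equals zero and, in particular, there exists $h\in G_s$ with $f(h)=-1$). This dichotomy gives the required equivalence. I don't anticipate any real obstacle: the whole argument hinges on the fact that a $\pm 1$-valued homomorphism on a finite group either is trivial or has kernel of index $2$, together with the standard orbit–stabilizer bookkeeping.
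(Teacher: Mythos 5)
Your proof is correct, and its core --- extracting the coefficient of each $t$ in the orbit of $s$, identifying $\{g:g.s=t\}$ with a coset of the stabilizer $G_s$, and reducing to whether $f|_{G_s}$ is trivial --- is essentially the same mechanism as the paper's, which reads off the coefficient of $s$ itself for the forward direction and sums $f$ over cosets of $\langle g\rangle$ for the converse. Indeed, your stabilizer dichotomy ($f|_{G_s}$ trivial gives coefficient $\abs{G_s}>0$; nontrivial gives kernel of index $2$ and coefficient $0$) already proves \emph{both} directions, so your first paragraph is redundant. That is fortunate, because it contains a small slip: the map $g\mapsto gh$ is an involution only when $h$ has order $2$; for $h$ of larger order it is a fixed-point-free permutation whose ``pairs'' $\{g,gh\}$ overlap in chains rather than partitioning $G$ into $\abs{G}/2$ two-element blocks. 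The cancellation is still immediate --- reindexing the sum by $g\mapsto gh$ shows it equals its own negative, hence vanishes over $\C$ --- or one can argue as the paper does, decomposing $G$ into cosets of $\langle h\rangle$ and using that $f(h)=-1$ forces $\abs{\langle h\rangle}$ to be even so that $\sum_j f(h)^j=0$. With either repair (or simply deleting the first paragraph), the argument is complete.
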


\begin{proof}
    First, suppose \Cref{eq:sum_to_zero} holds. Then in particular, the coefficient on $s$ in the sum is zero. Because $s$ occurs with positive coefficient (when $g$ is the identity element), it must occur with a negative coefficient as well. Hence, there exists $g\in G$ with $g.s=s$ and $f(g)=-1$.

    Conversely, suppose there exists $g\in G$ with $g.s=s$ and $f(g)=-1$. Let $H=\langle g\rangle\subseteq G$ be the subgroup generated by $g$. Let $t_1,\dots,t_k$ be a transversal of $H$ in $G$ so that $G=\uplus_{i=1}^k t_i H$. Then \begin{align*}
        \sum_{g\in G}f(g) g.s&=\sum_{i=1}^k \sum_{j=1}^{\abs{H}} f(t_ig^j)t_ig^j.s\\
        &=\sum_{i=1}^k f(t_i) \left(\sum_{j=1}^{\abs{H}} f(g^j)\right)t_i.s\\
        &=\sum_{i=1}^k f(t_i) \left(\sum_{j=1}^{\abs{H}} (-1)^j\right)t_i.s.
    \end{align*} Because $f(g)=-1$, it must be that $\abs{H}$ is even, so $\sum_{j=1}^{\abs{H}}(-1)^j=0$ and hence \Cref{eq:sum_to_zero} holds.
\end{proof}

\begin{lemma}\label{lem:projection_nonzero} For $\pi\in \Pi_{2(r+s)}$, \[e_{\bfa,\bfb}\LL_\pi e_{\bfa,\bfb}=0\] if and only if $\kappa_{\bfa,\bfb}(\pi)\notin \hat\Pi_{2(\bfa,\bfb)}$.
\end{lemma}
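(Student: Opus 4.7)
The plan is to expand $e_{\bfa,\bfb}\LL_\pi e_{\bfa,\bfb}$ as a signed sum over the two-sided action of $G = (S_\bfa \times S_\bfb) \times (S_\bfa \times S_\bfb)$ on $\pi$, and then apply \Cref{lem:sum_to_zero}. Since permutations in $S_{r+s}$ contribute no powers of $x$ in the diagram product, I will obtain
\[
e_{\bfa,\bfb}\LL_\pi e_{\bfa,\bfb} = \frac{1}{\abs{S_\bfa \times S_\bfb}^2}\sum_{((\sigma_1,\tau_1),(\sigma_2,\tau_2)) \in G} \sgn(\tau_1)\sgn(\tau_2)\,\LL_{\sigma_1\tau_1.\pi.\sigma_2\tau_2}.
\]
Defining $f((\sigma_1,\tau_1),(\sigma_2,\tau_2)) = \sgn(\tau_1)\sgn(\tau_2)$, which is a homomorphism $G\to\{\pm1\}$, \Cref{lem:sum_to_zero} reduces the vanishing question to whether the stabilizer $G^\pi$ contains an element on which $f$ takes the value $-1$.

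Next I will compute $G^\pi$ using \Cref{lem:fixing_partition}, which expresses the stabilizer of $\pi$ in $S_{r+s}\times S_{r+s}$ as a semidirect product $(\prod_{B \in \pi} N_B) \rtimes H$, with $N_B$ the within-block permutations and $H$ the permutations of whole blocks of matching type. Because $f$ is a homomorphism, $f$ takes a negative value somewhere on $G^\pi$ if and only if it does so somewhere on $G \cap \prod_B N_B$ or on $G \cap H$. I will show that these two possibilities correspond exactly to the two ways that $\kappa_{\bfa,\bfb}(\pi)$ can fail to lie in $\hat\Pi_{2(\bfa,\bfb)}$: having a repeated barred entry in some block, and having two identical blocks with an odd total number of barred entries.

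For the $\prod_B N_B$ factor, an element of $N_B$ lies in $G$ only when it permutes elements of a single $\kappa_{\bfa,\bfb}$-color, and it has $f = -1$ precisely when its $S_\bfb$-component is nontrivial. Such an element exists in $N_B\cap G$ if and only if $B$ contains two elements whose $\kappa_{\bfa,\bfb}$-values agree and are barred, i.e., the block $\kappa_{\bfa,\bfb}(B)$ of $\kappa_{\bfa,\bfb}(\pi)$ has a repeated barred entry. So $G \cap \prod_B N_B$ contains an $f = -1$ element if and only if $\kappa_{\bfa,\bfb}(\pi) \notin \Pi_{2(\bfa,\bfb)}$.

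For the $H$ factor, elements of $G \cap H$ permute $\kappa_{\bfa,\bfb}$-identical blocks, and the key sign computation I expect to carry out is that a transposition of two $\kappa_{\bfa,\bfb}$-identical blocks is an $f = -1$ element exactly when the blocks have an odd total number of barred entries, since the $S_\bfb$-part on top and on bottom each contribute a transposition for every barred position in the block, and their parities add. Longer cycles decompose into transpositions within a single $\kappa_{\bfa,\bfb}$-identical class, so $G \cap H$ has a negative-sign element if and only if two identical blocks of $\kappa_{\bfa,\bfb}(\pi)$ have an odd number of barred entries, which is precisely the additional constraint cutting $\hat\Pi_{2(\bfa,\bfb)}$ out of $\Pi_{2(\bfa,\bfb)}$. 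Combining both cases yields the claim. The main obstacle I anticipate is a careful bookkeeping of the sign for block swaps, tracking how top and bottom $S_\bfb$-parts interact; the rest is straightforward unpacking of \Cref{lem:fixing_partition} and \Cref{lem:sum_to_zero}.
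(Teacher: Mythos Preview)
Your proposal is correct and follows essentially the same approach as the paper: expand the projection as a signed sum over $G=(S_\bfa\times S_\bfb)^2$, apply \Cref{lem:sum_to_zero} with $f(\sigma_1\tau_1,\sigma_2\tau_2)=\sgn(\tau_1)\sgn(\tau_2)$, use \Cref{lem:fixing_partition} to decompose $G^\pi$ into its within-block and block-permuting parts, and identify the two cases with the two defining conditions of $\hat\Pi_{2(\bfa,\bfb)}$. Your explicit observation that $f$ being a homomorphism reduces the existence of an $f=-1$ element in the semidirect product to one in either factor is a useful clarification that the paper leaves implicit.
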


\begin{proof}
    For ease of notation, let $G=(S_\bfa\times S_\bfb)^2$. Applying \Cref{lem:sum_to_zero} with $f:G\to\{\pm1\}$ given by $f(\sigma_1\tau_1,\sigma_2\tau_2)=\sgn(\tau_1)\sgn(\tau_2)$ gives us that $e_{\bfa,\bfb}\LL_\pi e_{\bfa,\bfb}=0$ if and only if $\pi$ is fixed by some element \[(\sigma_1\tau_1,\sigma_2\tau_2)\in(S_\bfa\times S_\bfb)^2\] with $\sgn(\tau_1)\sgn(\tau_2)=-1$.

    Let $G^\pi$ be the subgroup of $G$ which fixes $\pi$. By \Cref{lem:fixing_partition}, \begin{align*}
        G^\pi=\left(\prod_{B\in\pi} N_B\cap G\right)\rtimes\left(H\cap G\right).
    \end{align*} Then there exists $g\in G^\pi$ with $f(g)=-1$ if and only if one or both of the following hold:

    \begin{enumerate}
        \item[(i)] There exists $g\in N_B\cap G$ for some $B\in\pi$ with $f(g)=-1$.
        \item[(ii)] There exists $g\in H\cap G$ with $f(g)=-1$.
    \end{enumerate}

    In the first case, an element of $N_B\cap G$ is a permutation of elements within the block $B$ which preserves what color the element is mapped to under $\kappa_{\bfa,\bfb}$. There exists $g\in N_B\cap G$ with $f(g)=-1$ if and only if there are at least two elements of $B$ which map to the same barred element under $\kappa_{\bfa,\bfb}$.

    In the latter case, an element of $H\cap G$ is a permutation of the blocks of $\pi$ which only sends a block $B$ to another block mapped to the same multiset under $\kappa_{\bfa,\bfb}$. There exists $g\in H\cap G$ with $f(g)=-1$ if and only if there is a repeated block of $\kappa_{\bfa,\bfb}(\pi)$ with an odd number of barred elements.

    Hence, $e_{\bfa,\bfb}\LL_\pi e_{\bfa,\bfb}=0$ if and only if $\kappa_{\bfa,\bfb}(\pi)\notin\hat\Pi_{2(\bfa,\bfb)}$.
\end{proof}

\begin{definition} Let $\hat\pi\in\hat\Pi_{2(\bfa,\bfb)}$ and let $\pi$ be its standardization. Define \begin{align*}
    \DD_\hpi&\defeq e_{\bfa,\bfb} \LL_\pi e_{\bfa,\bfb}.
\end{align*}
\end{definition}

\begin{theorem} The set $\{\DD_\hpi:\hpi\in\hat\Pi_{2(\bfa,\bfb)}\}$ forms a basis for $\MP_{\bfa,\bfb}(x)$.\end{theorem}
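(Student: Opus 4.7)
The strategy is to view $\MP_{\bfa,\bfb}(x) \subset P_{r+s}(x)$ and analyze the family $\{\DD_\hpi\}$ through the diagram basis $\{\LL_\pi : \pi \in \Pi_{2(r+s)}\}$ of $P_{r+s}(x)$. The two essential inputs are \Cref{lem:projection_nonzero} and the absorption identity $e_{\bfa,\bfb}\cdot(\sigma\tau) = (\sigma\tau)\cdot e_{\bfa,\bfb} = \sgn(\tau)\,e_{\bfa,\bfb}$ for $\sigma\tau \in S_\bfa\times S_\bfb$, which is a one-line consequence of the definition of $e_{\bfa,\bfb}$ together with the fact that $S_\bfa$ and $S_\bfb$ act on disjoint index sets.

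For spanning, note that $\MP_{\bfa,\bfb}(x)$ is automatically spanned by $\{e_{\bfa,\bfb}\LL_\pi e_{\bfa,\bfb} : \pi \in \Pi_{2(r+s)}\}$. By \Cref{lem:projection_nonzero} these are zero unless $\kappa_{\bfa,\bfb}(\pi) \in \hat\Pi_{2(\bfa,\bfb)}$. For any $\hpi \in \hat\Pi_{2(\bfa,\bfb)}$ and any $\pi' \in \kappa_{\bfa,\bfb}^{-1}(\hpi)$, I will exhibit $(g_1, g_2) \in (S_\bfa\times S_\bfb)^2$ with $\pi' = g_1.\pi.g_2$ by matching blocks of $\pi$ to blocks of $\pi'$ having the same $\kappa_{\bfa,\bfb}$-image and then permuting within each color fiber. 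The absorption identity then gives
\[e_{\bfa,\bfb}\LL_{\pi'} e_{\bfa,\bfb} = \sgn(\tau_1)\sgn(\tau_2)\,\DD_\hpi,\]
so the full spanning set collapses to scalar multiples of $\{\DD_\hpi : \hpi \in \hat\Pi_{2(\bfa,\bfb)}\}$.

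For linear independence, I will expand
\[\DD_\hpi = \frac{1}{\abs{S_\bfa \times S_\bfb}^2}\sum_{(g_1,g_2)\in (S_\bfa \times S_\bfb)^2}\sgn(\tau_1)\sgn(\tau_2)\,\LL_{g_1.\pi.g_2}\]
in the diagram basis, where $\pi$ is the standardization of $\hpi$. Because $\kappa_{\bfa,\bfb}$ is constant on the $(S_\bfa\times S_\bfb)^2$-orbit of $\pi$, the support of $\DD_\hpi$ in the diagram basis sits inside the fiber $\kappa_{\bfa,\bfb}^{-1}(\hpi)$. These fibers are pairwise disjoint for distinct $\hpi$, so the $\DD_\hpi$ have pairwise disjoint supports in $\{\LL_\pi\}$. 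Combined with $\DD_\hpi \neq 0$ (again \Cref{lem:projection_nonzero}), linear independence in $\MP_{\bfa,\bfb}(x)$ follows from linear independence of the diagram basis in $P_{r+s}(x)$.

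The main subtlety to verify is the well-definedness of the sign in $e_{\bfa,\bfb}\LL_{\pi'} e_{\bfa,\bfb} = \pm\DD_\hpi$: two pairs $(g_1, g_2)$ and $(g_1', g_2')$ with $g_1.\pi.g_2 = g_1'.\pi.g_2' = \pi'$ could a priori give opposite signs. Any such ambiguity would produce a stabilizer element of $\pi$ in $(S_\bfa\times S_\bfb)^2$ with $\sgn(\tau)\sgn(\tau') = -1$, which by \Cref{lem:projection_nonzero} forces $\DD_\hpi = 0$, contradicting $\hpi \in \hat\Pi_{2(\bfa,\bfb)}$. Beyond this consistency check, the entire argument is routine diagram-algebra bookkeeping built on top of the two lemmas already established.
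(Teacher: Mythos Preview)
Your proposal is correct and follows essentially the same approach as the paper: span via \Cref{lem:projection_nonzero} and the observation that the $\kappa_{\bfa,\bfb}$-fibers are exactly the $(S_\bfa\times S_\bfb)^2$-orbits, and linear independence from the fact that the $\DD_\hpi$ are nonzero and supported on disjoint orbits in the diagram basis. Your explicit treatment of the sign-consistency subtlety is a nice addition that the paper leaves implicit.
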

\begin{proof}
    By \Cref{lem:projection_nonzero}, $e_{\bfa,\bfb}\LL_\pi e_{\bfa,\bfb}=0$ unless $\kappa_{\bfa,\bfb}(\pi)\in\hat\Pi_{2(\bfa,\bfb)}$. Hence, the set \[\{e_{\bfa,\bfb}\LL_\pi e_{\bfa,\bfb}:\pi\in\Pi_{2(r+s)},\kappa_{\bfa,\bfb}(\pi)\in\hat\Pi_{2(\bfa,\bfb)}\}\] spans $\MP_{\bfa,\bfb}(x)$. Furthermore, if $\kappa_{\bfa,\bfb}(\pi)=\kappa_{\bfa,\bfb}(\nu)$, then $\pi=(\sigma_1\tau_1,\sigma_2\tau_2).\nu$ for some $(\sigma_1\tau_1,\sigma_2\tau_2)\in (S_\bfa\times S_\bfb)^2$, so $e_{\bfa,\bfb}\LL_\pi e_{\bfa,\bfb}=\sgn(\tau_1)\sgn(\tau_2)e_{\bfa,\bfb}\LL_\nu e_{\bfa,\bfb}$. We then need only one representative from each preimage $\kappa_{\bfa,\bfb}^{-1}(\hat\pi)$, and so the set \[\{\DD_\hpi:\hpi\in\hat\Pi_{2(\bfa,\bfb)}\}\] spans $\MP_{\bfa,\bfb}(x)$. Because the elements $\{\DD_\hpi:\hpi\in\hat\Pi_{2(\bfa,\bfb)}\}$ are nonzero sums over distinct $(S_\bfa\times S_\bfb)^2$-orbits, they are linearly independent and hence form a basis of $\MP_{\bfa,\bfb}(x)$.
\end{proof}

\begin{remark} This description of the basis leads immediately to a product formula. Given $\hpi,\hnu\in\hat\Pi_{2\bfa}$ and $\pi,\nu$ their standardizations, \begin{align*}
    \DD_\hpi\DD_\hnu&=(e_{\bfa,\bfb}\LL_\pi e_{\bfa,\bfb})(e_{\bfa,\bfb}\LL_\nu e_{\bfa,\bfb})\\
    &=\frac{1}{\abs{S_\bfa\times S_\bfb}}\sum_{\sigma\tau\in S_\bfa\times S_\bfb}\sgn(\tau) e_\bfa \LL_\pi \sigma \tau \LL_\nu e_\bfa\\
    &=\frac{1}{\abs{S_\bfa\times S_\bfb}}\sum_{\sigma\tau\in S_\bfa\times S_\bfb}\sgn(\tau) x^{c(\pi,\sigma\tau.\nu)}e_\bfa \LL_{\pi\circ(\sigma\tau.\nu)} e_\bfa. 
\end{align*} On diagrams, this amounts to the following process. First draw the diagrams for $\hpi$ and $\hnu$ in standard form and place the former on top of the latter. Then average over all permutations of identically colored vertices on top of the diagram for $\hnu$, taking their product as set partition diagrams and recording the sign of the permutation of the open circles.
\end{remark}

\begin{example} 
    \newcommand{\topdiagram}{\begin{tikzpicture}[xscale=.5,yscale=.5,line width=1.25pt] 
        \foreach \i in {1,2,3,4}  { \path (\i,1.25) coordinate (T\i); \path (\i,.25) coordinate (B\i); } 
        \filldraw[fill= black!12,draw=black!12,line width=4pt]  (T1) -- (T4) -- (B4) -- (B1) -- (T1);
        \draw[black] (B1)--(T1) .. controls +(0,-.4) and +(0,-.4) .. (T3);
        \draw[black] (B2)--(B3);
        \draw[black] (T4)--(B4);
        \colortop{1,2}{c1}
        \colortopalt{3,4}{c3}
        \colorbot{1,2}{c1}
        \colorbotalt{3,4}{c3}
    \end{tikzpicture}}

We compute the product of $\inline{\begin{tikzpicture}[xscale=.3,yscale=.3,line width=1.25pt] 
        \foreach \i in {1,2,3,4}  { \path (\i,1.25) coordinate (T\i); \path (\i,.25) coordinate (B\i); } 
        \filldraw[fill= black!12,draw=black!12,line width=4pt]  (T1) -- (T4) -- (B4) -- (B1) -- (T1);
        \draw[black] (B1)--(T1) .. controls +(0,-.4) and +(0,-.4) .. (T3);
        \draw[black] (B2)--(B3);
        \draw[black] (T4)--(B4);
        \colortop{1,2}{c1}
        \colortopalt{3,4}{c3}
        \colorbot{1,2}{c1}
        \colorbotalt{3,4}{c3}
    \end{tikzpicture}}$ and $\inline{\begin{tikzpicture}[xscale=.3,yscale=.3,line width=1.25pt] 
    \foreach \i in {1,2,3,4}  { \path (\i,1.25) coordinate (T\i); \path (\i,.25) coordinate (B\i); } 
        \filldraw[fill= black!12,draw=black!12,line width=4pt]  (T1) -- (T4) -- (B4) -- (B1) -- (T1);
        \draw[black] (T1) -- (1,.35) .. controls +(0,+.4) and +(0,+.4) .. (B3);
        \draw[black] (T4) -- (4,.35) .. controls +(0,+.4) and +(0,+.4) .. (B2);
        \colortop{1,2}{c1}
        \colortopalt{3,4}{c3}
        \colorbot{1,2}{c1}
        \colorbotalt{3,4}{c3}
    \end{tikzpicture}}$ in $\MP_{(2),(2)}(x)$:

\begin{align*}
    &\frac{1}{2!2!}\left(
    \begin{array}{c}
    \topdiagram \\
    \begin{tikzpicture}[xscale=.5,yscale=.5,line width=1.25pt] 
    \foreach \i in {1,2,3,4}  { \path (\i,1.25) coordinate (T\i); \path (\i,.25) coordinate (B\i); } 
        \filldraw[fill= black!12,draw=black!12,line width=4pt]  (T1) -- (T4) -- (B4) -- (B1) -- (T1);
        \draw[black] (T1) -- (1,.35) .. controls +(0,+.4) and +(0,+.4) .. (B3);
        \draw[black] (T4) -- (4,.35) .. controls +(0,+.4) and +(0,+.4) .. (B2);
        \colortop{1,2}{c1}
        \colortopalt{3,4}{c3}
        \colorbot{1,2}{c1}
        \colorbotalt{3,4}{c3}
    \end{tikzpicture}
    \end{array}\!+\!\begin{array}{c}
    \topdiagram \\
    \begin{tikzpicture}[xscale=.5,yscale=.5,line width=1.25pt] 
    \foreach \i in {1,2,3,4}  { \path (\i,1.25) coordinate (T\i); \path (\i,.25) coordinate (B\i); } 
        \filldraw[fill= black!12,draw=black!12,line width=4pt]  (T1) -- (T4) -- (B4) -- (B1) -- (T1);
        \draw[black] (T2) -- (1,.35) .. controls +(0,+.4) and +(0,+.4) .. (B3);
        \draw[black] (T4) -- (4,.35) .. controls +(0,+.4) and +(0,+.4) .. (B2);
        \colortop{1,2}{c1}
        \colortopalt{3,4}{c3}
        \colorbot{1,2}{c1}
        \colorbotalt{3,4}{c3}
    \end{tikzpicture}
    \end{array}\!-\!\begin{array}{c}
    \topdiagram \\
    \begin{tikzpicture}[xscale=.5,yscale=.5,line width=1.25pt] 
    \foreach \i in {1,2,3,4}  { \path (\i,1.25) coordinate (T\i); \path (\i,.25) coordinate (B\i); } 
        \filldraw[fill= black!12,draw=black!12,line width=4pt]  (T1) -- (T4) -- (B4) -- (B1) -- (T1);
        \draw[black] (T1) -- (1,.35) .. controls +(0,+.4) and +(0,+.4) .. (B3);
        \draw[black] (T3) -- (4,.35) .. controls +(0,+.4) and +(0,+.4) .. (B2);
        \colortop{1,2}{c1}
        \colortopalt{3,4}{c3}
        \colorbot{1,2}{c1}
        \colorbotalt{3,4}{c3}
    \end{tikzpicture}
    \end{array}
    \!-\!\begin{array}{c}
    \topdiagram \\
    \begin{tikzpicture}[xscale=.5,yscale=.5,line width=1.25pt] 
    \foreach \i in {1,2,3,4}  { \path (\i,1.25) coordinate (T\i); \path (\i,.25) coordinate (B\i); } 
        \filldraw[fill= black!12,draw=black!12,line width=4pt]  (T1) -- (T4) -- (B4) -- (B1) -- (T1);
        \draw[black] (T2) -- (1,.35) .. controls +(0,+.4) and +(0,+.4) .. (B3);
        \draw[black] (T3) -- (4,.35) .. controls +(0,+.4) and +(0,+.4) .. (B2);
        \colortop{1,2}{c1}
        \colortopalt{3,4}{c3}
        \colorbot{1,2}{c1}
        \colorbotalt{3,4}{c3}
    \end{tikzpicture}
    \end{array}\right)\\
    =&\hspace{.09in}\frac{1}{4}\left(x\begin{array}{c}
    \begin{tikzpicture}[xscale=.5,yscale=.5,line width=1.25pt] 
    \foreach \i in {1,2,3,4}  { \path (\i,1.25) coordinate (T\i); \path (\i,.25) coordinate (B\i); } 
    \filldraw[fill= black!12,draw=black!12,line width=4pt]  (T1) -- (T4) -- (B4) -- (B1) -- (T1);
    \draw[black] (T3) .. controls +(0,-.4) and +(0,-.4) .. (T1)--(1,.35) .. controls +(0,+.4) and +(0,+.4) .. (B3);
    \draw[black] (T4) -- (4,.35) .. controls +(0,+.4) and +(0,+.4) .. (B2);
    \colortop{1,2}{c1}
    \colortopalt{3,4}{c3}
    \colorbot{1,2}{c1}
    \colorbotalt{3,4}{c3}
    \end{tikzpicture}
    \end{array}
    \!+\!\begin{array}{c}
    \begin{tikzpicture}[xscale=.5,yscale=.5,line width=1.25pt] 
    \foreach \i in {1,2,3,4}  { \path (\i,1.25) coordinate (T\i); \path (\i,.25) coordinate (B\i); } 
    \filldraw[fill= black!12,draw=black!12,line width=4pt]  (T1) -- (T4) -- (B4) -- (B1) -- (T1);
    \draw[black] (T1).. controls +(0,-.4) and +(0,-.4) ..(T3);
    \draw[black] (1,.35) .. controls +(0,+.4) and +(0,+.4) .. (B3);
    \draw[black] (T4) -- (4,.35) .. controls +(0,+.4) and +(0,+.4) .. (B2);
    \colortop{1,2}{c1}
    \colortopalt{3,4}{c3}
    \colorbot{1,2}{c1}
    \colorbotalt{3,4}{c3}
    \end{tikzpicture}
    \end{array}
    \!-\!\begin{array}{c}
    \begin{tikzpicture}[xscale=.5,yscale=.5,line width=1.25pt] 
    \foreach \i in {1,2,3,4}  { \path (\i,1.25) coordinate (T\i); \path (\i,.25) coordinate (B\i); } 
    \filldraw[fill= black!12,draw=black!12,line width=4pt]  (T1) -- (T4) -- (B4) -- (B1) -- (T1);
    \draw[black] (T3).. controls +(0,-.4) and +(0,-.4) ..(T1)--(1,.35) .. controls +(0,+.4) and +(0,+.4) .. (B3);
    \draw[black] (4,.35) .. controls +(0,+.4) and +(0,+.4) .. (B2);
    \colortop{1,2}{c1}
    \colortopalt{3,4}{c3}
    \colorbot{1,2}{c1}
    \colorbotalt{3,4}{c3}
    \end{tikzpicture}
    \end{array}
    \!-\!\begin{array}{c}
    \begin{tikzpicture}[xscale=.5,yscale=.5,line width=1.25pt] 
    \foreach \i in {1,2,3,4}  { \path (\i,1.25) coordinate (T\i); \path (\i,.25) coordinate (B\i); } 
    \filldraw[fill= black!12,draw=black!12,line width=4pt]  (T1) -- (T4) -- (B4) -- (B1) -- (T1);
    \draw[black] (T1).. controls +(0,-.4) and +(0,-.4) ..(T3);
    \draw[black] (B1)--(B4);
    \colortop{1,2}{c1}
    \colortopalt{3,4}{c3}
    \colorbot{1,2}{c1}
    \colorbotalt{3,4}{c3}
    \end{tikzpicture}
    \end{array}\right)
    \intertext{Notice that the last diagram does not correspond to a restricted multiset partition, so it projects to zero by \Cref{lem:projection_nonzero}.}
    =&\hspace{.09in}\frac{1}{4}\left(x\begin{array}{c}
    \begin{tikzpicture}[xscale=.5,yscale=.5,line width=1.25pt] 
    \foreach \i in {1,2,3,4}  { \path (\i,1.25) coordinate (T\i); \path (\i,.25) coordinate (B\i); } 
    \filldraw[fill= black!12,draw=black!12,line width=4pt]  (T1) -- (T4) -- (B4) -- (B1) -- (T1);
    \draw[black] (T3).. controls +(0,-.4) and +(0,-.4) ..(T1)--(1,.35) .. controls +(0,+.4) and +(0,+.4) .. (B3);
    \draw[black] (T4) -- (4,.35) .. controls +(0,+.4) and +(0,+.4) .. (B2);
    \colortop{1,2}{c1}
    \colortopalt{3,4}{c3}
    \colorbot{1,2}{c1}
    \colorbotalt{3,4}{c3}
    \end{tikzpicture}
    \end{array}
    \!+\!\begin{array}{c}
    \begin{tikzpicture}[xscale=.5,yscale=.5,line width=1.25pt] 
    \foreach \i in {1,2,3,4}  { \path (\i,1.25) coordinate (T\i); \path (\i,.25) coordinate (B\i); } 
    \filldraw[fill= black!12,draw=black!12,line width=4pt]  (T1) -- (T4) -- (B4) -- (B1) -- (T1);
    \draw[black] (T1).. controls +(0,-.4) and +(0,-.4) ..(T3);
    \draw[black] (1,.35) .. controls +(0,+.4) and +(0,+.4) .. (B3);
    \draw[black] (T4) -- (4,.35) .. controls +(0,+.4) and +(0,+.4) .. (B2);
    \colortop{1,2}{c1}
    \colortopalt{3,4}{c3}
    \colorbot{1,2}{c1}
    \colorbotalt{3,4}{c3}
    \end{tikzpicture}
    \end{array}
    \!-\!\begin{array}{c}
    \begin{tikzpicture}[xscale=.5,yscale=.5,line width=1.25pt] 
    \foreach \i in {1,2,3,4}  { \path (\i,1.25) coordinate (T\i); \path (\i,.25) coordinate (B\i); } 
    \filldraw[fill= black!12,draw=black!12,line width=4pt]  (T1) -- (T4) -- (B4) -- (B1) -- (T1);
    \draw[black] (T3).. controls +(0,-.4) and +(0,-.4) ..(T1)--(1,.35) .. controls +(0,+.4) and +(0,+.4) .. (B3);
    \draw[black] (4,.35) .. controls +(0,+.4) and +(0,+.4) .. (B2);
    \colortop{1,2}{c1}
    \colortopalt{3,4}{c3}
    \colorbot{1,2}{c1}
    \colorbotalt{3,4}{c3}
    \end{tikzpicture}
    \end{array}\right).
\end{align*}
\end{example}

\section{Representations of \texorpdfstring{$\MP_{\bfa,\bfb}(n)$}{MPab(n)}}\label{sec:epa_reps}

In order to construct the simple $\MP_{\bfa,\bfb}(n)$-modules, we make the following definition inspired by \Cref{prop:projected_irreps}.

\begin{definition} Let $\bfa\in W_{r,k}$, $\bfb\in W_{s,m}$ and $n\geq2\abs\bfa+2\abs\bfb$. For $\lambda\in\Lambda^{P_{r+s}(n)}$, define \begin{align*}
    \MP_{\bfa,\bfb}^\lambda\defeq e_{\bfa,\bfb} P_{r+s}^\lambda.
\end{align*}
\end{definition}

Let $\Lambda^{\MP_{\bfa,\bfb}(n)}$ be the set of $\lambda\in\Lambda^{P_{r+s}(n)}$ for which $\MP_{\bfa,\bfb}^\lambda\neq0$. By \Cref{prop:projected_irreps} and the double centralizer theorem, we know that \[\{\MP_{\bfa,\bfb}^\lambda:\lambda\in\Lambda^{\MP_{\bfa,\bfb}(n)}\}\] forms a complete set of simple $\MP_{\bfa,\bfb}(n)$-modules. This section is devoted to providing bases for these modules.

\begin{definition} For $\tT\in\MT(\lambda,\bfa,\bfb)$, let $T$ be its standardization and define \[y_\tT\defeq e_{\bfa,\bfb} v_T.\] For $S\in\kappa_{\bfa,\bfb}^{-1}(\tT)$, let $\sigma_S\tau_S\in S_\bfa\times S_\bfb$ be any permutation so that $\sigma_S\tau_S v_S=v_T$. Then \begin{align*}
    e_{\bfa,\bfb} v_S&=\sgn(\tau_S)y_{\tT}.
\end{align*}
\end{definition}

\begin{lemma}\label{lem:no_repeated_even}
    Recall that an even multiset is one with an even number of barred elements. If $\tT\in\MT(\lambda,\bfa,\bfb)$ has a repeated even entry in a column, then $y_{\tT}=0$.
\end{lemma}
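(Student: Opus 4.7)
The plan is to exhibit a permutation $g \in S_\bfa \times S_\bfb$ whose action on $v_T$ equals $-v_T$ but which contributes sign $+1$ to the idempotent $e_{\bfa,\bfb}$; then a coset-pairing argument (exactly the one used in \Cref{lem:sum_to_zero}) forces $e_{\bfa,\bfb} v_T = 0$.

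First, I would trace the repeated even entry through to the standardization $T$ of $\tT$. By construction of the standardization, a block $\tB$ appearing twice in the same column of $\tT$ corresponds to two boxes of $T$ sitting in that same column whose contents are distinct blocks $B_1, B_2$ of the underlying set partition of $[r+s]$ satisfying $\kappa_{\bfa,\bfb}(B_1) = \kappa_{\bfa,\bfb}(B_2) = \tB$.

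Next, I would build $g$ by pairing each element of $B_1$ with an element of $B_2$ having the same image under $\kappa_{\bfa,\bfb}$ and multiplying all the resulting transpositions. Every such transposition stays inside a single $\kappa_{\bfa,\bfb}$-fiber, so $g = \sigma\tau$ with $\sigma \in S_\bfa$ and $\tau \in S_\bfb$; the key point is that $\tau$ is a product of exactly as many transpositions as there are barred entries in $\tB$, so the even-parity hypothesis gives $\sgn(\tau) = +1$.

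To compute $g \cdot v_T$, I would observe that $g$ swaps the indices of $B_1$ with those of $B_2$ and fixes every other element of $[r+s]$, so $g.T$ differs from $T$ only by interchanging the contents of those two same-column boxes. The simplest step of the straightening algorithm recalled earlier then gives $v_{g.T} = -v_T$, i.e.\ $g \cdot v_T = -v_T$. Substituting $h \mapsto hg$ in the sum defining $e_{\bfa,\bfb} v_T$, using $\sgn(\tau_{hg}) = \sgn(\tau_h)\sgn(\tau) = \sgn(\tau_h)$ together with $(hg) \cdot v_T = -h \cdot v_T$, yields $e_{\bfa,\bfb} v_T = -e_{\bfa,\bfb} v_T$, so $y_\tT = 0$.

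The only piece requiring genuine care is the parity bookkeeping: verifying that $g$ lies in $S_\bfa \times S_\bfb$ and that $\sgn(\tau) = +1$. This is precisely where the assumption that $\tB$ has an even number of barred entries is used; without it the transpositions in $\tau$ would yield $\sgn(\tau) = -1$, and the substitution would reproduce $e_{\bfa,\bfb} v_T$ on the right rather than negate it, making the argument fail.
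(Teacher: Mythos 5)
Your proposal is correct and follows essentially the same route as the paper: both identify two same-column boxes of the standardization whose contents lie in the same $\kappa_{\bfa,\bfb}$-fiber, produce a swapping element $\sigma\tau\in S_\bfa\times S_\bfb$ with $\sgn(\tau)=+1$ because the block has an even number of barred entries, and combine $\sigma\tau v_T=-v_T$ (the column-swap rule of the straightening algorithm) with $e_{\bfa,\bfb}\sigma\tau=\sgn(\tau)e_{\bfa,\bfb}$ to conclude $e_{\bfa,\bfb}v_T=-e_{\bfa,\bfb}v_T=0$. Your explicit construction of $\tau$ as a product of one transposition per barred entry is just a concrete instance of the paper's "any such permutation," and your coset substitution $h\mapsto hg$ is the same computation as the paper's application of $e_{\bfa,\bfb}$ to both sides.
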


\begin{proof}
    If $\tT$ has a repeated even entry in a column, then any $T\in\kappa_{\bfa,\bfb}^{-1}(\tT)$ has two boxes with contents $R$ and $S$ in the same column with $\kappa_{\bfa,\bfb}(R)=\kappa_{\bfa,\bfb}(S)$ having an even number of barred elements. For any permutation $\sigma\tau\in S_\bfa\times S_\bfb$ with $\sigma\tau(R)=S$ and $\sigma\tau(S)=R$ which fixes every other element of $[r]$, $\tau$ must then be an even permutation. Furthermore, $\sigma\tau v_T=v_{T'}$ where $T'$ is the tableau obtained by swapping the two boxes in the same column. By the straightening algorithm for set partition tableaux, we have that $\sigma\tau v_T=-v_T$. Applying $e_{\bfa,\bfb}$ to both sides yields the following:

    \begin{align*}
        e_{\bfa,\bfb} \sigma\tau v_T&=-e_{\bfa,\bfb} v_T\\
        \sgn(\tau) e_{\bfa,\bfb} v_T&=-e_{\bfa,\bfb} v_T\\
        e_{\bfa,\bfb} v_T&=-e_{\bfa,\bfb} v_T,
    \end{align*} so $e_{\bfa,\bfb} v_T=0$ and hence $y_\tT=0$.
\end{proof}

We may then assume that our tableaux $\tT$ do not have repeated even entries in their columns.

\begin{lemma}\label{lem:exterior_tableau_straightening}
    Let $\tT\in\MT(\lambda,\bfa,\bfb)$ and suppose $\tT$ has a decrease (or a weak decrease of odd entries) along a row. Then \begin{align*}
        y_\tT&=\sum_{\tS}c_\tS y_\tS
    \end{align*} where the sum is over $\tS$ where the decrease (or weak decrease) has been eliminated and $\tS$ is larger than $\tT$ in the column dominance order.
\end{lemma}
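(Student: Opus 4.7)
The plan is to reduce to the straightening algorithm for standard multiset partition tableaux (\Cref{thm:straightening}) by passing to the standardization $T$ of $\tT$. First I would verify that $T$ inherits a strict decrease along the same row as $\tT$: a strict decrease $\tT_{i,j}>\tT_{i,j+1}$ forces $T_{i,j}>T_{i,j+1}$ in the last-letter order by \Cref{rmk:standard_content}, while a weak decrease of odd entries $\tT_{i,j}=\tT_{i,j+1}$ gives the same conclusion because the standardization convention places the larger of two equal-$\kappa_{\bfa,\bfb}$ contents to the left.

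Having reduced to this case, I would apply \Cref{thm:straightening} to $T$ with the appropriate Garnir transversal $g(A,B)$, obtaining
\begin{align*}
    v_T = -\sum_{\id\neq\sigma\in g(A,B)}\sgn(\sigma)\,v_{\sigma.T}
\end{align*}
with $T\triangleleft\sigma.T$ for each nontrivial $\sigma$, and then apply $e_{\bfa,\bfb}$ to both sides. The left side becomes $y_\tT$. Each term $e_{\bfa,\bfb}v_{\sigma.T}$ on the right is either zero (when $\kappa_{\bfa,\bfb}(\sigma.T)$ has a repeated even block in a column, by \Cref{lem:no_repeated_even}) or equals $\pm y_{\kappa_{\bfa,\bfb}(\sigma.T)}$ directly from the definition of $y$ above. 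By \Cref{rmk:col_dominance}, $T\triangleleft\sigma.T$ implies $\tT\triangleleft\kappa_{\bfa,\bfb}(\sigma.T)$, so this writes $y_\tT$ as a linear combination of $y_{\tS}$ with each $\tS\rhd\tT$.

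The intermediate $\tS$ arising may still contain decreases (or weak decreases of odd entries) along rows, so I would iterate the procedure on each such $\tS$, invoking reverse induction on the column dominance order. Each iteration strictly increases this order and preserves the content of $\tT$; since only finitely many multiset partition tableaux have a given content, the process terminates. Composing the expansions yields the desired identity, in which every $\tS$ appearing has had the decrease eliminated.

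The main subtlety to watch for is sign bookkeeping in the identification $e_{\bfa,\bfb}v_{\sigma.T}=\pm y_{\kappa_{\bfa,\bfb}(\sigma.T)}$, since the Garnir permutation $\sigma$ acts on box positions while the sign carried into $y$ is that of the barred part of an element of $S_\bfa\times S_\bfb$ acting on contents in $[r+s]$. This is handled by applying the definition of $y_{\tS}$ directly: any element of $S_\bfa\times S_\bfb$ transporting $v_{\sigma.T}$ to the standardization of $\kappa_{\bfa,\bfb}(\sigma.T)$ yields the correct sign, in the spirit of \Cref{rmk:straightening}.
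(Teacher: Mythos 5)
Your reduction to the standard straightening relation is the same route the paper takes: choose a preimage of $\tT$ with a strict decrease (your observation that the standardization itself works, because equal $\kappa_{\bfa,\bfb}$-images are placed with the larger content to the left, is correct), apply the Garnir relation of \Cref{thm:straightening}, project by $e_{\bfa,\bfb}$, and iterate using termination in the column dominance order. But there is a genuine gap at the central step. After projecting, several distinct Garnir permutations $\eta\in g(A,B)$ can satisfy $\kappa_{\bfa,\bfb}(\eta.T)=\tT$ (namely those $\eta$ that only permute boxes with identical $\kappa_{\bfa,\bfb}$-images), so the term $y_{\tT}$ appears on the right-hand side with several signed contributions, and your appeal to \Cref{rmk:col_dominance} to conclude $\tT\lhd\kappa_{\bfa,\bfb}(\sigma.T)$ for every $\sigma\neq\id$ is false for exactly these $\sigma$. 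One must show that the total coefficient of $y_\tT$ in the projected relation is nonzero; otherwise the relation degenerates and cannot be solved for $y_\tT$. This is not mere ``sign bookkeeping'' on individual terms: it is a potential cancellation across terms, and it is where the paper spends the second half of its proof.

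The paper resolves it as follows. If $\kappa_{\bfa,\bfb}(\eta.S)=\tT$, then $\eta$ corresponds to some $\sigma\tau\in S_\bfa\times S_\bfb$ with $v_{\eta.S}=\sigma\tau\,v_S$, and one computes $\sgn(\eta)\sgn(\tau_{\eta.S})=\sgn(\eta)\sgn(\tau)\sgn(\tau_S)$. Since $A\cup B$ has no repeated even entries (columns are strict in even entries and the weak-decrease case is only invoked for odd entries, by \Cref{lem:no_repeated_even}), such an $\eta$ permutes only odd boxes, whence $\sgn(\eta)=\sgn(\tau)$ and every contribution to $y_\tT$ equals $\sgn(\tau_S)$. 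The coefficient of $y_\tT$ is therefore a positive multiple of $\sgn(\tau_S)\neq0$, and the relation can be solved. Your proof needs this argument (or an equivalent one) inserted before the iteration step.
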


\begin{proof}
    Suppose $\tT$ has a decrease (or weak decrease of odd entries) in a row. Then there is an $S\in\kappa_{\bfa,\bfb}^{-1}(\tT)$ which has the same content as the standardization of $\tT$ but has a decrease $a>b$ in the same position as the (weak) decrease in $\tT$. Let $A$ be the set of boxes of $S$ including $a$ and above and $B$ be the set of boxes including $b$ and below. Let $g(A,B)$ be the Garnir transversal associated to these subsets. Then, \begin{align*}
        \sum_{\eta\in g(A,B)}\sgn(\eta)v_{\eta.S}=0
    \end{align*} where each $\eta.S$ has the decrease removed and is larger than $S$ in the column-dominance order. Then multiplying by $e_{\bfa,\bfb}$, we get \begin{align*}
        \sum_{\eta\in g(A,B)}\sgn(\eta)e_{\bfa,\bfb} v_{\eta.S}&=\sum_{\eta\in g(A,B)}\sgn(\eta)\sgn(\tau_{\eta.S})y_{\kappa_{\bfa,\bfb}(\eta.S)}=0
    \end{align*} where $\sigma_{\eta.S}\tau_{\eta.S}\in S_\bfa\times S_\bfb$ is any element so that $\sigma_{\eta.S}\tau_{\eta.S} v_{\eta.S}=v_T$ where $T$ is the standardization of $\kappa_{\bfa,\bfb}(\eta.S)$.

    Each $\kappa_{\bfa,\bfb}(\eta.S)\neq\tT$ has the decrease (or weak decrease) eliminated and by \Cref{rmk:col_dominance} is larger than $\tT$ in the column dominance order. We then need only show that the coefficient on $y_\tT$ in the above sum is nonzero.

    If $\kappa_{\bfa,\bfb}(\eta.S)=\tT$, then $\eta$ only permutes boxes of $S$ which are identical under $\kappa_{\bfa,\bfb}$. That is, there exists a permutation $\sigma\tau\in S_\bfa\times S_\bfb$ such that $v_{\eta.S}=\sigma\tau v_S$ (see \Cref{rmk:straightening}). We now compute an element $\sigma_{\eta.S}\tau_{\eta.S}$ such that $\sigma_{\eta.S}\tau_{\eta.S}v_{\eta.S}=v_T$ starting from such an element for $S$.

    \begin{align*}
        \sigma_S\tau_S v_S&=v_T\\
        \sigma_S\tau_S(\sigma\tau)^{-1}\sigma\tau v_S&=v_T\\
        (\sigma_S\sigma^{-1})(\tau_S\tau^{-1})v_{\eta.S}&=v_T
    \end{align*}

    Hence, we may choose $\sigma_{\eta.S}\tau_{\eta.S}=(\sigma_S\sigma^{-1})(\tau_S\tau^{-1})$. Then, \begin{align*}
        \sgn(\eta)\sgn(\tau_{\eta.S})&=\sgn(\eta)\sgn(\tau_S \tau^{-1})\\
        &=\sgn(\eta)\sgn(\tau)\sgn(\tau_S).
    \end{align*}

    Note that because we can assume that $A$ and $B$ are strictly increasing in even entries and we are not considering the case of a weak even decrease, $A\cup B$ has no repeated even entries. Hence, $\eta$ permutes only odd boxes, leaving the even boxes fixed, so $\sgn(\eta)=\sgn(\tau)$. Then,

    \begin{align*}
        \sgn(\eta)\sgn(\tau_{\eta.S})&=\sgn(\tau_S).
    \end{align*}

    Because each $y_\tT$ in the sum appears with the same nonzero coefficient $\sgn(\tau_S)$, the coefficient on $y_\tT$ is nonzero, and we have written $y_\tT$ as the desired linear combination.
\end{proof}

\begin{theorem}\label{thm:basis} The set $\{y_\hT:\hT\in\SSMT(\lambda,\bfa,\bfb)\}$ forms a basis for $\MP_{\bfa,\bfb}^\lambda$.
\end{theorem}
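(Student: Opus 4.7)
My plan is to establish the claim in two stages---spanning via iterated straightening, followed by linear independence via a global dimension count against the $\smRSK$ bijection of the previous section---and to let these two inputs pin down the indexing set $\Lambda^{\MP_{\bfa,\bfb}(n)}$ as a byproduct.

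First I would observe that $\MP_{\bfa,\bfb}^\lambda = e_{\bfa,\bfb} P_{r+s}^\lambda$ is tautologically spanned by $\{e_{\bfa,\bfb} v_T : T \in \SMT(\lambda,r+s)\}$, and the sign relation $e_{\bfa,\bfb}v_S = \sgn(\tau_S)\, y_{\kappa_{\bfa,\bfb}(S)}$ rewrites this as $\{y_\tT : \tT \in \MT(\lambda,\bfa,\bfb)\}$. By \Cref{lem:no_repeated_even} I may discard those $\tT$ with a repeated even entry in a column, so condition (2a) of semistandardness may be assumed. Because every surviving $\tT$ equals $\kappa_{\bfa,\bfb}(T)$ for a standard $T$, and $\kappa_{\bfa,\bfb}$ is weakly order-preserving on the last-letter order, the rows and columns of $\tT$ already weakly increase---condition (1) is automatic. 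The remaining defects are strict row decreases and weak odd decreases along rows, which are exactly the cases covered by \Cref{lem:exterior_tableau_straightening}; each application rewrites $y_\tT$ as a linear combination of $y_\tS$ with $\tS$ strictly larger in the column dominance order. Since the Garnir transversal acts inside the standardization and preserves the strict increase of its columns, the images $\tS$ retain weakly increasing columns, so the iteration stays inside $\MT(\lambda,\bfa,\bfb)$. The column dominance order on tableaux of a fixed shape is a partial order on a finite set, so the process terminates and expresses every $y_\tT$ as a $\C$-linear combination of $\{y_\hT : \hT \in \SSMT(\lambda,\bfa,\bfb)\}$.

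For linear independence I would not construct a dual basis but instead count globally. The diagram basis of the previous section gives $\dim\MP_{\bfa,\bfb}(n) = |\hat\Pi_{2(\bfa,\bfb)}|$, and the $\smRSK$ bijection rewrites the right-hand side as $\sum_\lambda |\SSMT(\lambda,\bfa,\bfb)|^2$. On the other hand, the double centralizer theorem yields $\dim\MP_{\bfa,\bfb}(n) = \sum_{\lambda \in \Lambda^{\MP_{\bfa,\bfb}(n)}}(\dim \MP_{\bfa,\bfb}^\lambda)^2$. The spanning bound $\dim \MP_{\bfa,\bfb}^\lambda \leq |\SSMT(\lambda,\bfa,\bfb)|$, combined with $\MP_{\bfa,\bfb}^\lambda = 0$ whenever $\SSMT(\lambda,\bfa,\bfb) = \emptyset$, lets me compare the two sums of squares term by term and forces equality throughout. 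This simultaneously identifies $\Lambda^{\MP_{\bfa,\bfb}(n)} = \{\lambda : \SSMT(\lambda,\bfa,\bfb) \neq \emptyset\}$ and upgrades the spanning set to a basis.

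The main obstacle I anticipate is the bookkeeping in the spanning step: I must verify that each application of \Cref{lem:exterior_tableau_straightening} respects every axiom of semistandardness rather than only the row-increase axiom it is explicitly stated for, and that a weak odd decrease in a row of $\tT$ lifts to a genuine strict decrease in its standardization $T$ on which the Garnir machinery inherited from the set-partition setting actually operates. Once those compatibilities are in place, the rest reduces to the bijective count already established.
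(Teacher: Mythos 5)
Your proposal is correct and follows essentially the same route as the paper's proof: spanning is obtained by iterating \Cref{lem:no_repeated_even} and \Cref{lem:exterior_tableau_straightening} with termination guaranteed by the column dominance order, and linear independence follows from comparing the sum-of-squares dimension count from the double centralizer theorem against the $\smRSK$ enumeration of $\hat\Pi_{2(\bfa,\bfb)}$, which also pins down $\Lambda^{\MP_{\bfa,\bfb}(n)}$. (One small quibble: weak increase of rows is not automatic for $\kappa_{\bfa,\bfb}(T)$ since $\kappa_{\bfa,\bfb}$ need not be monotone for the last-letter order on blocks of different sizes, but your argument already accounts for this by straightening strict row decreases as well, exactly as the paper does.)
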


\begin{proof}

    Clearly, the set $\{e_{\bfa,\bfb} v_T:T\in\SMT(\lambda,r+s)\}$ spans $\MP_{\bfa,\bfb}^\lambda$, and so by \Cref{lem:no_repeated_even} the vectors $y_\tT$ for $\tT\in\MT(\lambda,\bfa,\bfb)$ without a repeated even entry in any column span $\MP_{\bfa,\bfb}^\lambda$. If $\tT$ has a repeated odd-sized entry within a row, we can apply \Cref{lem:exterior_tableau_straightening} to write it as a linear combination of multiset partition tableaux which do not have this repeat. The resulting tableaux may have repeated odd entries or decreases in other locations, in which case we repeatedly apply the lemma. Because the tableaux are always larger in the column dominance order, this process eventually terminates, writing $y_\tT$ as a linear combination of $y_\hS$ where $\hS\in\SSMT(\lambda,\bfa,\bfb)$. Hence, the set $\{y_\hT:\hT\in\SSMT(\lambda,\bfa,\bfb)\}$ spans $\MP_{\bfa,\bfb}^\lambda$.

    Comparing dimensions in the decomposition of $\MP_{\bfa,\bfb}(n)$ as a bimodule over itself and applying the super multiset RSK bijection (\ref{eq:smRSK_bijection}), we obtain the following equation.

\begin{align*}
    \sum_{\lambda\in\Lambda^{\MP_{\bfa,\bfb}(n)}} (\dim(\MP_{\bfa,\bfb}^\lambda))^2&=\dim(\MP_{\bfa,\bfb}(n))\\
                                                                    &=\abs{\hat\Pi_{2(\bfa,\bfb)}}\\
                                                                     &=\sum_{\lambda\in\Lambda^{\MP_{\bfa,\bfb}(n)}} \abs{\SSMT(\lambda,\bfa,\bfb)}^2
\end{align*}

The spanning set above shows that $\dim(\MP_{\bfa,\bfb}^\lambda)\leq\abs{\SSMT(\lambda,\bfa,\bfb)}$ for each $\lambda$, so the above equality implies that $\dim(\MP_{\bfa,\bfb}^\lambda)=\abs{\SSMT(\lambda,\bfa,\bfb)}$ and the spanning set $\{y_\hT:\hT\in\SSMT(\lambda,\bfa,\bfb)\}$ is in fact a basis.
\end{proof}

The construction of the simple $\MP_{\bfa,\bfb}(n)$-modules as projections of simple $P_{r+s}(n)$-modules leads to a straightforward formula for the action of a multiset partition on a semistandard multiset partition tableau. That is, for $\kappa_{\bfa,\bfb}(\pi)=\tpi$,

\begin{align*}
    e_{\bfa,\bfb}\LL_\pi e_{\bfa,\bfb} . e_{\bfa,\bfb} v_T&=\sum_{(\sigma,\tau)\in S_\bfa\times S_\bfb} \sgn(\tau) e_{\bfa,\bfb} (\LL_{\pi.\sigma\tau}.v_T).
\end{align*} In terms of diagrams, this formula amounts to first pulling out the content of the tableau as a one-row diagram, placing the diagram for $\tpi$ on top. Then average over all permutations of identically-colored vertices at the bottom of $\tpi$, noting the sign of the permutation of the open circles. The action is then computed exactly as for the partition algebra and the results summed with the appropriate sign.

\begin{example} Here we show the action of a multiset partition on a tableau.
    \input{figures.ex_mp_action.tex}
\end{example}

\section{Decompositions of Multivariate Polynomial Rings}\label{sec:conclusions}

In this final section, we apply the above results on super RSK and the mixed multiset partition algebra to describe how a certain multivariate polynomial ring decomposes as an $S_n$-module and $GL_n$-module. Let \[\C[X_{n\times m};\Theta_{n\times m'}]\defeq \C[x_{i,j},\theta_{i,j}:1\leq i\leq n,1\leq j\leq m,1\leq j'\leq m']\] where the variables $x_{i,j}$ commute with all variables and $\theta_{i,j}\theta_{a,b}=-\theta_{a,b}\theta_{i,j}$ if either $i\neq a$ or $j\neq b$ and ${\theta_{i,j}}^2=0$. For $r+s=n$, $\bfa\in W_{r,m}$ and $\bfb\in W_{s,m'}$, write \[\C[X_{n\times m};\Theta_{n\times m'}]^{\bfa,\bfb}\] for the subspace spanned by monomials with degree $\bfa_t$ in the variables $x_{i,t}$ and degree $\bfb_t$ in the variables $\theta_{i,t}$. 

\begin{example}
    \begin{align*}
        \textcolor{c1}{x_{11}x_{11}x_{31}}\textcolor{c2}{x_{22}x_{32}}\textcolor{c3}{\theta_{11}\theta_{21}}\textcolor{c4}{\theta_{42}}\theta_{23}\in\C[X_{3\times 5};\Theta_{3\times4}]^{(\textcolor{c1}{3},\textcolor{c2}{2}),(\textcolor{c3}{2},\textcolor{c4}{1},1)}.
    \end{align*}
\end{example}

\subsection{As an $S_n$-module}

To understand the ring $\C[X_{n\times m};\Theta_{n\times m'}]$ as an $S_n$-module, we want to think of it as decomposing into projections by the idempotents $e_{\bfa,\bfb}$. That is, we notice that \begin{align*}
    \C[X_{n\times m};\Theta_{n\times m'}]^{\bfa,\bfb}&\cong e_{\bfa,\bfb}{V_n}^{\otimes n}.
\end{align*} The following theorem can then be recovered as a straightforward application of \Cref{thm:dct} and \Cref{thm:basis}.

\begin{theorem}\label{thm:restriction}\cite[Theorem 3.1]{orellana2020combinatorial} Let $n\geq2\abs{\bfa}+2\abs{\bfb}$. Then as an $S_n$-module, \begin{align*}
    \C[X_{n\times m};\Theta_{n\times m'}]^{\bfa,\bfb}&\cong\bigoplus_{\lambda\in\Lambda^{\MP_{\bfa,\bfb}(n)}}\left(S^\lambda\right)^{\oplus\abs{\SSMT(\lambda,\bfa,\bfb)}}.
\end{align*}
\end{theorem}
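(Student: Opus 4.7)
The plan is to reduce \Cref{thm:restriction} to the identification $\C[X_{n\times m};\Theta_{n\times m'}]^{\bfa,\bfb}\cong e_{\bfa,\bfb}V_n^{\otimes(r+s)}$ (where $r=\abs{\bfa}$, $s=\abs{\bfb}$) and then apply the double centralizer theorem together with \Cref{thm:basis}.

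The first step is to give an $S_n$-equivariant isomorphism
\[\C[X_{n\times m};\Theta_{n\times m'}]^{\bfa,\bfb}\cong\bigotimes_{t=1}^{m}\Sym^{\bfa_t}(V_n)\otimes\bigotimes_{t=1}^{m'}\bigwedge\nolimits^{\bfb_t}(V_n),\]
where $S_n$ acts on $V_n=\C^n$ by permutation matrices and diagonally across tensor factors. This is the standard identification of symmetric and exterior powers with polynomial and Grassmann algebras: monomials of degree $\bfa_t$ in the commuting variables $\{x_{1,t},\dots,x_{n,t}\}$ form a basis of $\Sym^{\bfa_t}(V_n)$, monomials of degree $\bfb_t$ in the anticommuting variables $\{\theta_{1,t},\dots,\theta_{n,t}\}$ form a basis of $\bigwedge^{\bfb_t}(V_n)$, and the $S_n$-action on the row index matches the permutation action on $V_n$ in both cases.

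Next, I would recognize the right-hand side as $e_{\bfa,\bfb}V_n^{\otimes(r+s)}$. The idempotent $e_{\bfa,\bfb}\in\C S_{r+s}\subseteq P_{r+s}(x)$ fully symmetrizes the blocks $\kappa_{\bfa,\bfb}^{-1}(t)$ of tensor positions for $t\in[k]$ (the commuting part) and fully antisymmetrizes the blocks $\kappa_{\bfa,\bfb}^{-1}(\ov t)$ for $t\in[m]$ (the anticommuting part, by virtue of the $\sgn(\tau)$ factor in its definition). Its image on $V_n^{\otimes(r+s)}$, acting by permuting tensor factors, is therefore precisely the tensor product of symmetric and exterior powers from the previous display.

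For the third and final step, the hypothesis $n\geq 2(r+s)=2\abs\bfa+2\abs\bfb$ puts us in the setting of classical Schur--Weyl duality for the partition algebra, so \Cref{thm:dct} applied to the mutually centralizing actions of $\C S_n$ and $P_{r+s}(n)\cong\End_{S_n}(V_n^{\otimes(r+s)})$ gives
\[V_n^{\otimes(r+s)}\cong\bigoplus_{\lambda\in\Lambda^{P_{r+s}(n)}}S^\lambda\otimes P_{r+s}^\lambda\]
as $S_n\times P_{r+s}(n)$-modules, where $S^\lambda$ denotes the simple $S_n$-module indexed by $\lambda$. Applying $e_{\bfa,\bfb}$ to both sides and restricting the index to the set $\Lambda^{\MP_{\bfa,\bfb}(n)}$ of $\lambda$ with $\MP_{\bfa,\bfb}^\lambda=e_{\bfa,\bfb}P_{r+s}^\lambda\neq 0$ produces an $S_n\times\MP_{\bfa,\bfb}(n)$ decomposition of $e_{\bfa,\bfb}V_n^{\otimes(r+s)}$; forgetting the $\MP_{\bfa,\bfb}(n)$-action and using $\dim(\MP_{\bfa,\bfb}^\lambda)=\abs{\SSMT(\lambda,\bfa,\bfb)}$ from \Cref{thm:basis} then yields the claim. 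The only point requiring any care is the identification at the level of the idempotent $e_{\bfa,\bfb}$, specifically that its symmetrization/antisymmetrization pattern is exactly what cuts out the symmetric and exterior tensor factors; beyond that, the theorem follows by a direct assembly of \Cref{thm:dct} and \Cref{thm:basis}.
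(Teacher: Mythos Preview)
Your proposal is correct and follows essentially the same route as the paper: identify $\C[X_{n\times m};\Theta_{n\times m'}]^{\bfa,\bfb}$ with $e_{\bfa,\bfb}V_n^{\otimes(r+s)}$ (you make the intermediate identification with $\Sym^{\bfa}(V_n)\otimes\bigwedge^{\bfb}(V_n)$ explicit, which the paper leaves implicit here and spells out only in \Cref{sec:GLn_module}), then invoke \Cref{thm:dct} for the $S_n\times P_{r+s}(n)$ decomposition of $V_n^{\otimes(r+s)}$, apply $e_{\bfa,\bfb}$, and read off multiplicities via \Cref{thm:basis}. The paper summarizes this as ``a straightforward application of \Cref{thm:dct} and \Cref{thm:basis},'' which is precisely what you have written out.
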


\subsection{As a $GL_n$-module}\label{sec:GLn_module} In this section we employ the super RSK correspondence to provide a combinatorial interpretation of the decomposition of $\C[X_{n\times m};\Theta_{n\times m'}]$ as a $GL_n$-module. To that end, we want to view the ring as decomposing into tensor products of symmetric and alternating powers. That is, we notice that \begin{align*}
    \C[X_{n\times m};\Theta_{n\times m'}]^{\bfa,\bfb}&\cong \Sym^\bfa(\C^n)\otimes \bigwedge\nolimits^\bfb(\C^n)\intertext{where}
    \Sym^\bfa(\C^n)&=\Sym^{\bfa_1}(\C^n)\otimes\dots\otimes \Sym^{\bfa_m}(\C^n)
    \intertext{and}
    \bigwedge\nolimits^\bfb(\C^n)&=\bigwedge\nolimits^{\bfb_1}(\C^n)\otimes\dots\otimes\bigwedge\nolimits^{\bfb_{m'}}(\C^n).
\end{align*} From this perspective, it's clear that the character of this subspace as a $GL_n$ module is given by $h_\bfa(X_n)e_\bfb(X_n)$ where \[h_\bfa=h_{\bfa_1}\cdots h_{\bfa_m} \text{ and } e_\bfb=e_{\bfb_1}\cdots e_{\bfb_{m'}}.\]

Let $\SSMT'(\lambda,\bfa,\bfb)$ be the set of semistandard multiset partition tableaux whose content has all blocks of size one and whose unbarred and barred elements have multiplicities given by $\bfa$ and $\bfb$ respectively.

\begin{theorem}\label{thm:he_expansion} Let $\bfa\in W_{r,m}$ and $\bfb\in W_{s,m'}$. Then,
    \begin{align*}
        h_\bfa e_\bfb = \sum_{\lambda\vdash r+s} \abs{\SSMT'(\lambda, \bfa, \bfb)}s_\lambda
    \end{align*}
\end{theorem}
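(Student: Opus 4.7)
The plan is to realize $h_\bfa e_\bfb$ as a generating function for ordered restricted biwords in the sense of super RSK and to read off the Schur expansion via the super RSK bijection. Take the superalphabet $\mathscr{A}=\{1<\cdots<m<\ov1<\cdots<\ov{m'}\}$, with unbarred letters declared even and barred letters declared odd, and let $\mathscr{B}=\{x_1<x_2<\cdots\}$ be an all-even alphabet with the usual order.

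First I would verify the generating function identity
\begin{align*}
h_\bfa(x)\,e_\bfb(x)\;=\;\sum_{w} x^{w^-},
\end{align*}
where $w$ ranges over ordered restricted biwords in $(\mathscr{A}\times\mathscr{B})^{r+s}$ whose top multiset equals $[m]^\bfa\uplus[\ov{m'}]^\bfb$ and $x^{w^-}$ denotes the monomial in the bottom entries. Muth's order forces the biletters with a fixed unbarred top letter $i$ to have weakly increasing bottom entries, contributing a factor $h_{\bfa_i}$, while for a fixed barred top letter $\ov j$ every biletter is mixed, so the restricted condition forces strictly increasing bottom entries, contributing $e_{\bfb_j}$. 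Multiplying these factors over all $i$ and $j$ yields $h_\bfa e_\bfb$.

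Next I would apply super RSK to each such $w$ to obtain a pair $(P_w,Q_w)$ of semistandard supertableaux of a common shape $\lambda\vdash r+s$. Because $\mathscr{B}$ is all-even, the super-tableau conditions on $P_w$ collapse to the usual SSYT conditions, so summing the associated monomial over all $P$ of shape $\lambda$ gives exactly $s_\lambda(x)$. The recording tableau $Q_w$ has singleton entries from $\mathscr{A}$ with multiplicities prescribed by $\bfa$ and $\bfb$; its super-tableau conditions translate, via the reduction of the last-letter order to the ordinary order on singletons and the dictionary between evenness/oddness and column/row strictness, to exactly the defining conditions of $\SSMT'(\lambda,\bfa,\bfb)$, with repeated unbarred singletons being column-strict (even case) and repeated barred singletons being row-strict (odd case). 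Summing over biwords and grouping by $\lambda$ then yields the claimed identity. The main point to verify carefully is this last translation of definitions between $Q_w$ and $\SSMT'(\lambda,\bfa,\bfb)$, which is a direct unwinding once one notes that every block of $Q_w$ is a singleton.
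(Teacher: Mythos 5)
Your proposal is correct and follows essentially the same route as the paper: identify the monomials of $h_\bfa e_\bfb$ with ordered restricted biwords whose top content is $[m]^{\bfa}\uplus[\ov{m'}]^{\bfb}$, apply super RSK, observe that the insertion tableau over the all-even bottom alphabet is an ordinary SSYT contributing $s_\lambda$, and that the recording tableau's supertableau conditions match the definition of $\SSMT'(\lambda,\bfa,\bfb)$. Your write-up is, if anything, slightly more explicit than the paper's about why Muth's order together with restrictedness yields exactly the weakly/strictly increasing conditions defining $h_{\bfa_i}$ and $e_{\bfb_j}$.
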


\begin{proof}
    Each monomial in $h_\bfa e_\bfb=h_{\bfa_1}\dots h_{\bfa_m}e_{\bfb_1}\dots e_{\bfb_{m'}}$ is of the form \begin{align*}
        (x_{i_1^{(1)}}\dots x_{i_{\bfa_1}^{(1)}})\dots(x_{i_1^{(m)}}\dots x_{i_{\bfa_m}^{(m)}})(x_{j_1^{(1)}}\dots x_{j_{\bfb_1}^{(1)}})\dots(x_{j_1^{(m')}}\dots x_{j_{\bfb_{m'}}^{(m')}})
    \end{align*} where each factor $(x_{i_1^{(s)}}\dots x_{i_{\bfa_s}^{(s)}})$ is the contribution of $h_{\bfa_s}$ and $(x_{j_1^{(t)}}\dots x_{j_{\bfb_t}^{(t)}})$ is the contribution of $e_{\bfb_t}$. These monomials are in bijection with ordered restricted biwords where the monomial as written above corresponds to the biword obtained by putting the multiset \begin{align*}
        \multi{(s,i_1^{(s)}),\dots,(s,i_{\bfa_s}^{(s)}):1\leq s\leq m}\uplus\multi{(\ov{t},j_1^{(t)}),\dots,(\ov{t},j_{\bfb_t}^{(t)}):1\leq t\leq m'}
    \end{align*} in order.

    Under the super RSK correspondence, these biwords are in bijection with pairs $(U,T)$ of the same shape $\lambda\vdash r+s$ where \begin{enumerate}
        \item $U$ is a semistandard Young tableau whose content is precisely the multiplicities of the subscripts of the variables in the monomial.
        \item $T$ is a semistandard multiset partition tableau with entries all size one and multiplicities of unbarred and barred values given by $\bfa$ and $\bfb$ respectively.
    \end{enumerate} Rearranging by partition shape, we can rewrite the sum \begin{align*}
        h_\bfa e_\bfb&=\sum_{\lambda\vdash r+s} \sum_{(U,T)} x^U\\
        &=\sum_{\lambda\vdash r+s} \abs{\SSMT'(\lambda,\bfa,\bfb)}\sum_{U\in\SSYT(\lambda)} x^U\\
        &=\sum_{\lambda\vdash r+s} \abs{\SSMT'(\lambda,\bfa,\bfb)}s_\lambda.
    \end{align*}
\end{proof}

\begin{example} Here we show the correspondence for a monomial in $h_3h_2e_2e_2$.
    \[(x_1x_1x_3)(x_2x_3)(x_1x_2)(x_2x_4)\]
    \[\updownarrow\]
    \[\left(\begin{array}{ccccccccc}
            1 & 1 & 1 & 2 & 2 & \ov1 & \ov1 & \ov2 & \ov2\\
            1 & 1 & 3 & 2 & 3 & 1 & 2 & 2 & 4
        \end{array}\right)\]
        \[\updownarrow\]
        \[\left(\inline{\young(11123,223,4)},\inline{\young(1112<\ov1>,2<\ov1><\ov2>,<\ov2>)}\right)\]
\end{example}

\begin{corollary} As a $GL_n$-module,
    \begin{align*}
        \C[X_{n\times m};\Theta_{n\times m'}]^{\bfa,\bfb}&\cong \bigoplus_{\ell(\lambda)\leq n} \left(GL_n^\lambda\right)^{\oplus \abs{\SSMT'(\lambda, \bfa, \bfb)}}.
    \end{align*}
\end{corollary}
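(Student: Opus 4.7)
The plan is to identify the character of $\C[X_{n\times m};\Theta_{n\times m'}]^{\bfa,\bfb}$ as a polynomial $GL_n$-module and then read off the irreducible multiplicities from \Cref{thm:he_expansion}. Since the polynomial ring is a tensor product of symmetric and exterior powers in the natural $GL_n$-representation $V_n=\C^n$, we have the isomorphism of $GL_n$-modules
\begin{align*}
    \C[X_{n\times m};\Theta_{n\times m'}]^{\bfa,\bfb}\cong \Sym^{\bfa_1}(V_n)\otimes\cdots\otimes\Sym^{\bfa_m}(V_n)\otimes\bigwedge\nolimits^{\bfb_1}(V_n)\otimes\cdots\otimes\bigwedge\nolimits^{\bfb_{m'}}(V_n)
\end{align*}
noted in the excerpt. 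Because the character of $\Sym^k(V_n)$ is $h_k(X_n)$ and the character of $\bigwedge^k(V_n)$ is $e_k(X_n)$, the character of this module evaluated at a diagonalizable $A\in GL_n$ with eigenvalues $X_n=(x_1,\dots,x_n)$ is $h_\bfa(X_n)e_\bfb(X_n)$.

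Next I would apply \Cref{thm:he_expansion} to expand $h_\bfa e_\bfb$ in the Schur basis:
\begin{align*}
    h_\bfa(X_n) e_\bfb(X_n) = \sum_{\lambda\vdash r+s} \abs{\SSMT'(\lambda,\bfa,\bfb)}\, s_\lambda(X_n).
\end{align*}
Since $s_\lambda(X_n)=0$ whenever $\ell(\lambda)>n$, only the terms with $\ell(\lambda)\leq n$ survive on the right-hand side. For these $\lambda$, the Schur polynomial $s_\lambda(X_n)$ is the character of the irreducible polynomial $GL_n$-module $GL_n^\lambda$.

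Finally, I would invoke the fact that the category of polynomial $GL_n$-representations is semisimple with irreducibles indexed by partitions of length at most $n$, so a polynomial $GL_n$-module is determined up to isomorphism by its character. Matching characters yields the claimed decomposition
\begin{align*}
    \C[X_{n\times m};\Theta_{n\times m'}]^{\bfa,\bfb}\cong\bigoplus_{\ell(\lambda)\leq n}(GL_n^\lambda)^{\oplus\abs{\SSMT'(\lambda,\bfa,\bfb)}}.
\end{align*}
There is no real obstacle here; the only subtlety worth calling out is that the sum in \Cref{thm:he_expansion} runs over all $\lambda\vdash r+s$ but collapses to the range $\ell(\lambda)\leq n$ upon restriction to $n$ variables, which is exactly the range that indexes the irreducible polynomial representations of $GL_n$.
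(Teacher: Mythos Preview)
Your proof is correct and follows exactly the approach the paper intends: the paper establishes that the character of the module is $h_\bfa(X_n)e_\bfb(X_n)$, proves \Cref{thm:he_expansion} giving its Schur expansion, and then states the corollary without further argument. Your write-up just makes explicit the routine step of matching characters (using $s_\lambda(X_n)=0$ for $\ell(\lambda)>n$ and semisimplicity of polynomial $GL_n$-representations) that the paper leaves implicit.
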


\newpage

\bibliography{bibliography}{}
\bibliographystyle{alpha}

\end{document}